\documentclass[a4paper, 11pt]{article}
\usepackage[utf8]{inputenc}
\usepackage{geometry}
\usepackage{graphicx}
\usepackage{authblk} %
\usepackage{cancel} %
\usepackage{amsmath}
\usepackage{amssymb}
\usepackage{amsthm}
\numberwithin{equation}{section}
\usepackage{stmaryrd} %

\usepackage{xcolor}%
\usepackage{mathtools}

\usepackage{csquotes} %
\usepackage{hyperref}
\hypersetup{colorlinks,citecolor=blue,filecolor=blacklinkcolor=red, urlcolor=magenta,linkcolor=red}

 \usepackage[normalem]{ulem} %
\usepackage{tikz,tikz-cd} %

\newtheorem{defi}{Definition}[section]
\newtheorem{lemm}{Lemma}[section]
\newtheorem{coro}{Corollary}[section]
\newtheorem{theo}{Theorem}[section]

\newtheorem{fact}{Fact}[section]
\newtheorem{prop}{Proposition}[section]
\theoremstyle{remark}
\newtheorem{rema}{Remark}[section]

\newtheoremstyle{myexample}%
{3pt}%
{3pt}%
{}%
{}%
{\bfseries}%
{}%
{.5em}%
{}%
\theoremstyle{myexample}
\newtheorem{exam}{Example}[section]
\usepackage[maxbibnames=10,articlein=false,giveninits=true,style=ext-numeric,hyperref,backend=biber]{biblatex}
\addbibresource{PBoundaryRigidity.bib}
\AtEveryBibitem{%
  \clearfield{issn} %
  \ifentrytype{online}{}{%
    \clearfield{url}
  }
}

\newcommand{\overbar}[1]{\mkern 1.5mu\overline{\mkern-1.5mu#1\mkern-1.5mu}\mkern 1.5mu}
\DeclareMathOperator{\sh}{sh}
\DeclareMathOperator{\ch}{ch}

\DeclareMathOperator{\id}{id}
\DeclareMathOperator{\Ad}{Ad}
\DeclareMathOperator{\vspan}{span}
\newcommand{\dd}{\textnormal{d}}

\newcommand{\R}{\mathbb{R}}

\newcommand{\Mbar}{\overbar{M}}

\newcommand{\pfrac}[2]{\frac{\partial#1}{\partial#2}}

\newcommand{\IsoHyperplane}{\tilde{G}} %
\newcommand{\RP}{\R\textnormal{P}}
\newcommand{\PGL}{\textnormal{PGL}}
\newcommand{\double}{D(\Mbar)}

\title{Boundary rigidity, and non-rigidity, of projective structures} 
\author{Jack Borthwick\thanks{\href{mailto:jack.borthwick@math.cnrs.fr}{jack.borthwick@mcgill.ca}}, \, Niky Kamran\thanks{\href{mailto:niky.kamran@mcgill.ca}{niky.kamran@mcgill.ca}}}
\affil{Department of Mathematics and Statistics, McGill University\\}

\begin{document}
\maketitle
{\let\thefootnote\relax\footnotetext{Keywords: projective geometry, Cartan projective connections, boundary rigidity }}
\begin{abstract}
We investigate the property of boundary rigidity for the projective structures associated to torsion-free affine connections on connected smooth manifolds with boundary. We show that these structures are generically boundary rigid, meaning that any automorphism of a generic projective structure that restricts to the identity on the boundary must itself be the identity. However, and in contrast with what happens for example for conformal structures, we show that there exist projective structures which are not boundary rigid. We characterise these non-rigid structures by the vanishing of a certain local projective invariant of the boundary. 
\end{abstract}

\section{Introduction}
We say that a geometric structure defined on a smooth manifold with boundary is {\em{boundary rigid}} if, given any automorphism of the structure that restricts to the identity on the boundary, that automorphism's Taylor expansion centered at any boundary point reduces to that of the identity. This means in particular that in the analytic category, a non-trivial automorphism of a boundary rigid geometric structure cannot restrict to the identity on the manifold's boundary; in this case we will say that the manifold has a \emph{rigid boundary}. (See Definition~\ref{DefRigidBoundary} below.)

Conformal structures on Riemannian manifolds with boundary constitute an important class of boundary rigid structures, as shown in~\cite[Proposition 3.3]{WRBLionheart_1997}. It is appropriate to mention at this stage that the boundary rigidity of conformal structures is a significant element in the study of the anisotropic Calder\'on inverse problem, which is to recover the metric of a Riemannian manifold with boundary from the Dirichlet-to-Neumann map for the Laplace-Beltrami operator. The specific role of boundary rigidity appears in this context through the invariance of the Dirichlet-to-Neumann map under diffeomorphisms that restrict to the identity on the boundary, meaning that the group of such diffeomorphisms constitutes a gauge invariance for the anisotropic Calder\'on problem. In particular, the boundary rigidity of conformal structures is an important ingredient in the construction of counterexamples to uniqueness for the Calder\'on inverse problem~\cite{Daude:2020aa,Daude:2019aa}.

From a geometric point of view, conformal structures are a special case of an important subclass of Cartan geometries known as parabolic geometries~\cite{Cap:2009aa}, which have received considerable attention over the past decade. It is thus natural to ask to what extent the property of boundary rigidity may hold within the broader class of parabolic geometries.  Our paper makes a first foray into this question by investigating the question of boundary rigidity for \emph{projective structures }on connected manifolds with boundary, that is, the structures defined by the set of unparametrised geodesics of torsion-free affine connections. We shall see that the situation is markedly different when compared with the conformal case in that there exist projective structures which are \emph{not} boundary rigid and which can be characterised invariantly. 
The question of boundary rigidity for projective structures also has inherent interest. For instance, it is relevant in the study of the long time dynamics and asymptotic behaviour of solutions to partial differential equations on manifolds (without boundary) equipped with a complete projectively compact affine connection as defined in~\cite{Cap:2016aa}, see also~\cite{Borthwick:2021aa}. It could be expected that some aspects of the asymptotic analysis could be recast into boundary value problems at the boundary and projective transformations would then be natural symmetries to consider.

One of the first ingredients in the proof of the boundary rigidity of conformal structures is that any conformal geometry is equivalent to a unique normal Cartan geometry, of which it is well known that the automorphism group is a finite-dimensional Lie group~\cite[Chapter IV, Theorem 6.1]{Kobayashi:1995aa}. Roughly speaking, this expresses the fact that in local coordinates the automorphisms of the structure satisfy some generically non-linear overdetermined system of partial differential equations whose solutions are parametrised by arbitrary constants (these constants provide a set of local coordinates on the Lie group). We shall recall shortly an analogous statement for projective structures (see Fact~\ref{DimensionGroupProjectiveTransformations}).

In contrast with conformal structures, the basic geometric data of a projective structure consist as mentioned above in the underlying trajectories of the geodesic flow of a spray or, in other words, the set of \emph{unparametrised} geodesics of a torsion-free affine connection $\nabla$. A projective structure %
on a manifold (without boundary) $M$ is thus often described as a class $[\nabla]$ of \emph{projectively equivalent} torsion-free affine connections, i.e. a class of torsion-free affine connections with the same geodesics up to reparametrisation. This property is characterised by the following classical result (see for instance~\cite[Proposition 7.2]{Kobayashi:1995aa}):
\begin{fact}\label{FactProjectiveEquivalence}
A pair of torsion free affine connections ${\nabla}$ and $\hat{\nabla}$ are projectively equivalent if and only if there exists a $1$-form $\Upsilon$ such that for all vector fields $X,Y$ on $M$, we have:
\[ \hat{\nabla}_X Y = \nabla_X Y + \Upsilon(X)Y + \Upsilon(Y)X. \]
\end{fact}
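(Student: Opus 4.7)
The plan is to prove the two implications separately. The \emph{direct} direction (existence of $\Upsilon$ implies projective equivalence) is a straightforward computation: if $\gamma$ is a geodesic of $\nabla$ so that $\nabla_{\dot\gamma}\dot\gamma = 0$, then substituting $X = Y = \dot\gamma$ in the given formula yields $\hat\nabla_{\dot\gamma}\dot\gamma = 2\Upsilon(\dot\gamma)\dot\gamma$, a vector proportional to $\dot\gamma$. Hence $\gamma$ is a pregeodesic of $\hat\nabla$, and a standard reparametrisation $t \mapsto s(t)$ chosen so that $\ddot s / \dot s^2 = -2\Upsilon(\dot\gamma)$ converts it into an affinely parametrised $\hat\nabla$-geodesic. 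The reverse conversion is analogous, so the two connections share the same unparametrised geodesics.

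For the \emph{converse}, the first step is to consider the difference tensor $S(X,Y) := \hat\nabla_X Y - \nabla_X Y$. Because both connections are torsion-free, $S$ is a symmetric $(1,2)$-tensor field. The projective equivalence assumption says that for every $v \in T_pM$, the $\nabla$-geodesic with initial velocity $v$ is, up to reparametrisation, a $\hat\nabla$-geodesic; differentiating twice and comparing with $\hat\nabla_{\dot\gamma}\dot\gamma = S(\dot\gamma,\dot\gamma)$ shows that at every point $p$ and for every $v \in T_pM$ one has $S(v,v) = \lambda(v)\,v$ for some scalar $\lambda(v) \in \R$. Polarising the identity $S(v+tw,v+tw) = \lambda(v+tw)(v+tw)$ in the parameter $t$ then forces $S(v,w)$ to lie in $\vspan(v,w)$, and matching the coefficient of $t^1$ gives the relation
\[
2\,S(v,w) = \frac{\lambda(v+tw)-\lambda(v)}{t}\,v + \bigl(\lambda(v+tw) - t\lambda(w)\bigr)w,
\]
whose limit as $t\to 0$ yields $2\,S(v,w) = (D_w\lambda)(v)\,v + \lambda(v)\,w$, where $D_w\lambda$ denotes the directional derivative of $\lambda$ regarded as a function on $T_pM$.

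The crucial step is now to upgrade $\lambda$ from a (positively) homogeneous function of degree one to a genuine linear functional. This I would obtain from the symmetry $S(v,w) = S(w,v)$: comparing the two resulting expressions and equating the components along $v$ and $w$ (for linearly independent $v,w$) yields $(D_w\lambda)(v) = \lambda(w)$, so $\lambda$ is linear in its argument. Setting $\Upsilon := \tfrac{1}{2}\lambda$, the formula obtained above collapses to $S(v,w) = \Upsilon(v)\,w + \Upsilon(w)\,v$, which is the desired identity. A short smoothness check on $\Upsilon$ (using that $S$ is smooth and $\Upsilon$ can be extracted algebraically, e.g.\ by taking the trace in the right-hand side to get $(n+1)\Upsilon(v) = \mathrm{tr}_Y\, S(v,Y)$) confirms that $\Upsilon$ is a genuine smooth $1$-form on $M$.

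The main obstacle is the linearity step for $\lambda$: the pregeodesic condition only gives homogeneity of degree one a priori, and one must exploit the symmetry of $S$ (which requires torsion-freeness of both connections) in an essential way to promote this to full linearity. The identification of $S(v,w)$ as lying in $\vspan(v,w)$ before polarising is also delicate and uses that the $\nabla$-geodesic issuing from any $v$ can be compared, in a neighbourhood of $p$, with a nearby geodesic issuing from $v+tw$.
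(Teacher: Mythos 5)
Your argument is correct and is essentially the standard proof of this classical result; the paper does not prove the Fact itself but cites it from Kobayashi (Proposition 7.2), where the same polarisation-plus-symmetry argument appears. The only points worth tightening are minor: $\lambda$ is smooth on $T_pM\setminus\{0\}$ because $\lambda(v)=\xi(S(v,v))/\xi(v)$ for any covector $\xi$ with $\xi(v)\neq 0$; the step from $(D_w\lambda)(v)=\lambda(w)$ (valid for $v,w$ linearly independent, so $n\geq 2$ as the paper assumes) to linearity of $\lambda$ should invoke Euler's relation $(D_v\lambda)(v)=\lambda(v)$ for the diagonal direction, after which $d\lambda$ is constant on the connected set $T_pM\setminus\{0\}$ and degree-one homogeneity kills the affine constant.
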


In these terms, a projective transformation is a diffeomorphism of $\phi: M\rightarrow M$ such that for any connection $\nabla$ in the projective class, $\phi^*\nabla$ and $\nabla$ are projectively equivalent.

Similarly to the conformal case, it is a well-known result of E. Cartan~\cite{Cartan:1924aa,Kobayashi:1995aa,Sharpe:1997aa} that classes of projectively equivalent torsion-free affine connections are in one-to-one correspondence with normal Cartan geometries on $M$ modeled on projective spaces. One can then show that projective transformations are precisely the automorphisms of the corresponding normal Cartan geometry so that by~\cite[Chapter IV, Theorem 6.1]{Kobayashi:1995aa}, one has:
\begin{fact} \label{DimensionGroupProjectiveTransformations}The group of projective transformations is a Lie group at most of dimension $n(n+2)=\dim \PGL(n,\R)$.
\end{fact}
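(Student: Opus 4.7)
The plan is to reduce the claim to a general theorem on automorphism groups of Cartan geometries. The discussion preceding the statement records that a projective structure on $M$ corresponds to a unique normal Cartan geometry modeled on the homogeneous space $\RP^n = \PGL(n,\R)/P$, with $P$ the isotropy subgroup of a point. The first step is to verify the bijection between projective transformations, in the sense introduced above, and automorphisms of this normal Cartan geometry. If $\phi$ is a projective transformation, its natural prolongation preserves the projective class of $\nabla$, so that the pullback of the normal Cartan connection is again a Cartan connection on the same bundle whose underlying structure is the given projective class and which satisfies the same algebraic normalization condition on its curvature. Uniqueness of the normal Cartan connection then forces the pullback to coincide with the original connection, proving that $\phi$ lifts to an automorphism of the Cartan geometry; the converse inclusion is immediate from the fact that the normal Cartan connection determines the projective structure.

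Once this identification is in place, the statement follows from~\cite[Chapter IV, Theorem 6.1]{Kobayashi:1995aa}, which asserts that the automorphism group of any Cartan geometry of type $(G, H)$ carries the structure of a Lie group of dimension at most $\dim G$. The underlying mechanism is that the Cartan connection defines an absolute parallelism on the total space $\tilde{P}\to M$ of the Cartan bundle, so any automorphism is fully determined by its value at a single point of $\tilde{P}$; the automorphism group thereby embeds as a closed submanifold of $\tilde{P}$, of dimension at most $\dim \tilde{P} = \dim G$. The stated bound is then obtained by the elementary dimension count $\dim \PGL(n,\R) = (n+1)^2 - 1 = n(n+2)$.

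The principal obstacle, and the only genuinely nontrivial ingredient, is the first step: the equivalence between projective structures and normal projective Cartan geometries, together with the compatibility of projective transformations with this equivalence. This is a classical but substantial result from the theory of parabolic geometries (see~\cite{Cap:2009aa}), which will be invoked here as a known tool; everything else amounts to citing Kobayashi's theorem and performing the dimension count above.
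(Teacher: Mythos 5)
Your proposal is correct and follows essentially the same route as the paper, which likewise observes that projective transformations automatically preserve the unique normal projective Cartan connection and then invokes~\cite[Chapter IV, Theorem 6.1]{Kobayashi:1995aa} together with the count $\dim\PGL(n,\R)=(n+1)^2-1=n(n+2)$ (in the paper's convention $\PGL(n,\R)=GL(n+1,\R)/Z$). The extra detail you supply --- uniqueness of the normalization forcing the pullback connection to coincide with the original, and the absolute-parallelism mechanism behind Kobayashi's bound --- is exactly the standard justification the paper leaves implicit.
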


A further, equivalent, point of view due to Thomas~\cite{Thomas:1934aa,Bailey:1994aa}, which in modern-day language is formulated in terms of vector bundles known as tractor bundles~\cite{Bailey:1994aa,Cap_Gover_2002}, is based on the observation that if $(\alpha^i_j)$ is the $\mathfrak{gl}(n;\R)$-valued connection form of a connection $\nabla$ in a projective class on an $n$-dimensional manifold $M$, expressed in an arbitrary local frame $(e_i)$ (with dual frame ($\omega^i))$, then the form:
\begin{equation}\label{ThomasInvariant} \Pi^i_j=\alpha^i_j -\frac{1}{n+1}\alpha^k_k \delta^i_j - \frac{1}{n+1}\alpha^k_k(e_j)\omega^i  \end{equation}
is a projective invariant, i.e. independent of the choice of $\nabla$. We will refer to $(\Pi^i_j)$ as the \emph{(local) Thomas (projective) invariant}. As Thomas observed, $(\Pi^i_j)$ does not transform under diffeomorphisms like an affine connection on the frame bundle, and as such is not a distinguished choice in the class, but can be used to define a unique linear connection on a vector bundle of rank $n+1$; this linear connection being equivalent to the normal Cartan connection.  The form $\Pi$ can in fact be interpreted as the $\mathfrak{gl}(n;\R)$-component of the normal Cartan connection in an appropriate Cartan gauge, and with the $\R^n$-valued one form $\theta=(\omega^i)$ it determines the normal connection uniquely. 

For the study of projective transformations the local invariant $\Pi$ provides the following local characterisation of projective transformations: \begin{lemm}\label{ThomasProjectiveTransformation} A diffeomorphism $\phi : M \rightarrow M$ is a projective transformation if and only if for any local sections $\sigma_U : U \rightarrow P^1(M)$, $\sigma_V : V \rightarrow P^1(M)$ of the frame bundle defined near $p$ and $\phi(p)$ respectively the local invariants $\Pi_U$ and $\Pi_V$ defined by~\eqref{ThomasInvariant} satisfy:  \[ (\phi|_U)^*\Pi_V = \Pi_U.\]
\end{lemm}
We will elaborate on these points further in Section~\ref{Projective_Structures_On_MWB} where we define projective structures on manifolds with boundary and briefly justify that the above discussion passes without any essential modification to this setting.

We now formulate the concept of boundary rigidity for projective structures. 
\begin{defi}\label{DefRigidBoundary}
We shall say that the boundary $\partial M$ of a manifold with boundary $\Mbar$ endowed with a projective structure (defined in Section~\ref{Projective_Structures_On_MWB}) is \emph{rigid} if the only projective transformation that restricts to the identity map on the boundary is the identity map. 
\end{defi}

Fact~\ref{DimensionGroupProjectiveTransformations}, which is consequence of a general statement for the automorphism group of an arbitrary Cartan geometry, is the first hint that boundary rigidity may not be limited to conformal structures. Indeed, one could expect that the condition of restricting to the identity map on the boundary exhausts a large number of degrees of freedom. It may be interesting to note that for conformal structures the maximal dimension of the automorphism group is $\frac{1}{2}(n+1)(n+2)\leq n(n+2)$.

The main result of our work is that the boundary of a smooth manifold with boundary endowed with a smooth projective structure is generically rigid in the above sense. More surprisingly however, and contrary to the conformal case, there exists a class of projective structures for which we have \emph{non-rigidity}. We shall discuss this further below, but first, we shall state our rigidity result after introducing our notation.

Throughout the paper we will work on a smooth manifold with boundary $\Mbar$ of dimension $n\geq 2$.  The interior will be denoted by $M$ and the boundary by $\partial M$. 

We will make extensive use of boundary charts with coordinates $(r,y^1,\dots, y^{n-1})$. From this it should be understood that $r$ is a boundary defining function on the defining neighbourhood $U$ of the chart, this means that $\partial M \cap U = \{r=0\}$ and $\dd r$ is nowhere vanishing on $\partial M \cap U$. 

When convenient, we will set $x^0=r$ and $x^\mu=y^\mu$ for any $1\leq \mu \leq n$. It will be understood that Latin indices, e.g. $i,j,k,\dots,$ will range from $0$ to $n-1$ and Greek indices $\mu,\nu,\sigma, \dots$ from $1$ to $n-1$. 
Symmetrisation of indices will be denoted as usual by round brackets and anti-symmetrisation by square brackets.

In Section~\ref{NonRigidity}, we make a brief but convenient use of the Penrose abstract index notation~\cite{Penrose:1984aa}; to avoid confusion, we reserve the beginning of the Latin alphabet, $a,b,c,\dots,$ for these indices. In brief, the principle is that abstract indices are labels of the nature of the tensor quantities, rather than indices with numerical value, and do not refer to any choice of frame. For instance $X^a$ would indicate that $X$ is a vector field, $u_a$, that $u$ is a $1$-form. Contraction is therefore simply written as $X^au_a=u(X)$. 

We now state 
\begin{theo}\label{ThmRigidity}
Let $\Mbar$ be a connected smooth manifold with boundary endowed with a smooth projective structure (see Section~\ref{Projective_Structures_On_MWB}). Suppose that there exists a boundary point $p\in \partial M$ and a local boundary chart near $p$ such that the local Thomas invariant form satisfies $\Pi^0_\nu(p) \not\in\vspan(\dd r(p))$ for at least one $1\leq \nu \leq n-1$. Then the boundary $\partial M$ is rigid.
\end{theo}

The condition $\Pi^0_\nu(p) \notin\vspan (\dd r(p))$ appearing in the above theorem may seem a little strange at first glance. Indeed, one may expect that it should always be possible to choose a coordinate chart  %
such that it is satisfied. We respond to this by the negative:

\begin{prop}\label{PropProjectiveAndCoordinateInvariance}
If there is a point $p\in \partial M$ and a boundary chart such that:
$\Pi^0_\nu(p) \in\vspan (\dd r_p)$ for all $1\leq \nu \leq n-1$, then this holds in any boundary chart near $p$.
\end{prop}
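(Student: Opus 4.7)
The plan is to split Proposition~\ref{PropProjectiveAndCoordinateInvariance} into two independent invariance claims: stability of the condition under a change of connection within the projective class, and stability under a change of boundary chart. In the chart $(r,y^1,\dots,y^{n-1})$, writing $\alpha^i_j=\Gamma^i_{jk}\,\dd x^k$, the hypothesis $\alpha^0_\nu(p)\in\vspan(\dd r_p)$ for every $1\le\nu\le n-1$ translates to the pointwise equalities $\Gamma^0_{\nu\mu}(p)=0$ for all $1\le\mu,\nu\le n-1$, so it suffices to show that this system is preserved under both operations.

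Projective invariance is immediate from the rule $\hat{\nabla}_X Y=\nabla_X Y+\Upsilon(X)Y+\Upsilon(Y)X$ recalled above: in components this reads $\hat\Gamma^i_{jk}=\Gamma^i_{jk}+\Upsilon_j\,\delta^i_k+\Upsilon_k\,\delta^i_j$, and for $i=0$, $j=\nu$, $k=\mu$ with $\mu,\nu\ge 1$ both Kronecker deltas vanish, so $\hat\Gamma^0_{\nu\mu}(p)=\Gamma^0_{\nu\mu}(p)$ and the vanishing persists.

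For coordinate invariance, let $(\tilde r,\tilde y^\mu)$ be another boundary chart around $p$. Since $r$ and $\tilde r$ both define $\partial M$, on the overlap one has $\tilde r=r\,f(r,y)$ and $r=\tilde r\,h(\tilde r,\tilde y)$ with $f$ and $h$ nowhere vanishing. Differentiating these identities forces the Jacobian $J^i_j(p):=(\partial\tilde x^i/\partial x^j)(p)$ to be block triangular: $J^0_\mu(p)=0$, $J^0_0(p)=f(p)\ne 0$, and the tangential block $A^\sigma{}_\mu:=(\partial\tilde y^\sigma/\partial y^\mu)(p)$ is invertible; dually $(\partial x^0/\partial\tilde x^\sigma)(p)=0$ for $\sigma\ge 1$. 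Plugging this structure into the standard Christoffel transformation law
\[
\tilde\Gamma^i_{jk}=J^i_\ell\,K^m_j\,K^n_k\,\Gamma^\ell_{mn}+J^i_\ell\,\frac{\partial^2 x^\ell}{\partial\tilde x^j\,\partial\tilde x^k},\qquad K:=J^{-1},
\]
one obtains, for $i=0$ and $\nu,\sigma\ge 1$,
\[
\tilde\Gamma^0_{\nu\sigma}(p)=f(p)\,(A^{-1})^\mu{}_\nu\,(A^{-1})^\tau{}_\sigma\,\Gamma^0_{\mu\tau}(p)+f(p)\,\frac{\partial^2 r}{\partial\tilde y^\nu\,\partial\tilde y^\sigma}(p).
\]
The first summand vanishes by hypothesis; the second vanishes because $r=\tilde r\,h$ yields $\partial_{\tilde y^\sigma}\partial_{\tilde y^\nu}r=\tilde r\,\partial_{\tilde y^\sigma}\partial_{\tilde y^\nu}h$, which is zero at $p$.

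The only step requiring any care is the treatment of the inhomogeneous (second-derivative) term in the Christoffel transformation law, but this is completely controlled by the observation that any two boundary defining functions for $\partial M$ near $p$ differ by multiplication by a nowhere-vanishing factor, which forces all tangential second derivatives of $r$ at $p$ in the new chart to vanish.
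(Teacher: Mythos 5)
Your proof is correct and follows essentially the same route as the paper: projective invariance is read off from $\hat\Gamma^i_{jk}=\Gamma^i_{jk}+2\delta^i_{(j}\Upsilon_{k)}$ with $i=0$ and $j,k\ge 1$, and chart invariance comes from the Christoffel transformation law combined with the fact that two boundary defining functions differ by a nowhere-vanishing factor, forcing the Jacobian to be block triangular at boundary points. If anything, your explicit treatment of the inhomogeneous term via $r=\tilde r\,h$ (so that $\partial^2 r/\partial\tilde y^\nu\partial\tilde y^\sigma$ vanishes on $\{\tilde r=0\}$) is slightly more careful than the paper's, which discards that term after only noting the vanishing of the first derivatives.
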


It is of course essential that the point $p\in \partial M$ be a boundary point; Proposition~\ref{PropProjectiveAndCoordinateInvariance} fails at interior points. The above result highlights however that the above condition expresses something about the geometry of the boundary. One may conjecture that the condition that the hypothesis of Proposition~\ref{PropProjectiveAndCoordinateInvariance} hold at every boundary point, as required for non-rigidity, is strong enough to determine a projective structure on the boundary; we found that this is indeed the case. In fact, this projective structure is obtained from another naturally induced Cartan geometry on the boundary. The model for this second Cartan geometry is surprisingly naïve: it is the structure one obtains on a hyperplane $\mathcal{H}$ by reducing the projective group $\PGL(n,\R)$ to the subgroup $\IsoHyperplane$ that stabilises it. It also has the curious feature, that it is \emph{not} an effective Cartan geometry (see~\cite[Chapter \textbf{4}\S3] {Sharpe:1997aa}), for the action of $\tilde{G}$ on the hyperplane is not effective. We elaborate on this further in Section~\ref{NonRigidity}.

\subsection*{Acknowledgements}
Research supported by NSERC grant RGPIN 105490-2018.

The authors would like to thank the anonymous referee for their valuable comments and questions that led to the discussion in Section~\ref{Projective_Structures_On_MWB}.

\section{Projective structures on manifolds with boundary}\label{Projective_Structures_On_MWB} 
{Projective structures on manifolds with boundary have appeared before in the literature, for instance, in the works of Čap and Gover on projective compactification~\cite{Cap:2014aa,Cap:2016aa}, however there does not seem to be any comprehensive account of their general theory. To fill this small gap in the literature,} we shall give a definition in the present section. It is important to emphasise from the outset that, throughout, we abide by the standard convention that at boundary points, the tangent space remains a \emph{vector} space isomorphic to $\R^n$; it is \emph{not} a half-space. We refer to~\cite{Lee:2003aa} for a textbook discussion on this point. Other pointwise constructions will follow a similar principle making boundary points, at this level, indistinguishable from other points. For instance, with the understanding that the tangent space $T_p\Mbar$ at a boundary point $p$ is a vector space, it is natural that the set of frames at $p$ will be isomorphic to $GL(\R^n, T_p\Mbar)$. In particular, frames at boundary points will not necessarily be compatible with any pointwise direct sum decomposition of $T_p\Mbar$ in which one of the summands would be $T_p\partial M$, where $\partial M$ is viewed as an embedded submanifold of $\Mbar$.

\subsection{Frame bundles on manifolds with boundary}
The first step is to extend the definition of the $k$-th order frame bundles to manifolds with boundary.  For this we will follow the approach in~\cite[\S 2.12]{Michor2011} for manifolds with corners and define a $k$-frame at $x\in \Mbar$ to be the equivalence class $[j^k_0f]$ of the $k$-jet of a local diffeomorphism $f: U \rightarrow \tilde{V}$ between neighbourhood $U$ of $0\in \R^n$ and an open subset $\tilde{V}$ of a manifold \emph{without} boundary $N$, containing the image of a neighbourhood $V$ of $x\in \Mbar$ under a (local) embedding $\psi: V\to \tilde V \subset N$, where $f(0)=\psi(x)$. In the above the equivalence relation is defined as follows: the $k$-jet of $f: U\rightarrow \tilde{V}$ defines the same $k$-frame as the $k$-jet of $f':U' \rightarrow \tilde{V}'$, $\tilde{V}'\subset N'$ if the coefficients of their Taylor polynomials to order $k$ in local charts are related by the chain rule. 

One can remark that a $k$-frame has a representative such that $\tilde{V}$ is a subset of $\R^n$ and the local embedding $\psi$ is a coordinate chart, $x: V \rightarrow x(V) \subset \tilde{V} $; the partial derivatives of this representative can be used to define a canonical coordinate system $(x^i,u^i_j,u^i_{jk})$ associated with the chart $(x,V)$.
With this in mind, the set of $k$-frames $P^k(\Mbar)$ can be given a topology and a smooth manifold with boundary structure just as in the way in which one proceeds for manifolds without boundary. In this case, however, $P^k(\Mbar)$ is itself a manifold \emph{with} boundary. More precisely, if $r : \Mbar \rightarrow \R_+$ is a local boundary defining function on $\Mbar$ then $r\circ \pi$ is a local boundary defining function of $P^k(\Mbar)$ where $\pi : P^k(\Mbar)\longrightarrow\Mbar$ is the canonical projection map. If $\partial M = \emptyset$ then this definition coincides with the usual one as one can always choose $N=M$. In particular, the interior of $P^k(\Mbar)$ can be identified with $P^k(M)$. 
\begin{rema}
One may also observe that it is possible to choose $N=D(\Mbar)$, the double of $\Mbar$ (see~\cite[Example 9.32 p. 226]{Lee:2003aa}), and $\psi$ an embedding of $\Mbar$ into $\double$. An equivalent approach could have been to define $P^k(\Mbar)=\psi^*P^k(\double)$, but the construction we have described above is manifestly independent of the choice of local extension of $\Mbar$. 
\end{rema}

If $\phi : \Mbar \rightarrow \Mbar$ is a diffeomorphism of $\Mbar$, then $\phi$ induces a diffeomorphism $\phi^{(k)}$ of $P^k(\Mbar)$, its $k$-th prolongation, according to the definition: $\phi^{(k)}([j^k_0f])=[j^k_0(\phi\circ f)]$ after locally extending $\phi$, when $f(0)=p$ is a boundary point, to a smooth map defined on a neighbourhood $\tilde{V}$ of a manifold without boundary that contains the embedded image of a neighbourhood $V\subset \Mbar$ of $p$, which is possible by definition of smoothness; the $k$-frame is easily seen to be independent of the extension. 

To discuss projective structures we are particularly interested in the cases $k\in \{1,2\}$. We recall that the set of $2$-frames at $0\in \R^n$ is a group denoted by $G^2(\R^n)$ with multiplication defined by:
\[(u^i_j, u^i_{jk})\cdot (s^i_j, u^i_{jk})=(u^i_ks^k_j, u^i_{lm}s^l_js^m_k+ u^i_ms^m_{jk}),\]
this acts on $P^2(\Mbar)$ from the right. This right-action equips $\pi : P^2(\Mbar) \rightarrow \Mbar$ with the structure of a $G^2(\R^n)$-principal bundle; extending verbatim the standard definition to manifolds with boundary. In other words, the local model is given by $\mathbb H^n\times G^2(\mathbb R^n)\to \mathbb H^n$.

The definition of the canonical form $\theta$ on $P^2(\Mbar)$ from~\cite[\S 5, Chapter IV]{Kobayashi:1995aa} can be extended to manifolds with boundary as follows.  Let $f: U \rightarrow \tilde{V}\subset N$, $\psi : V \rightarrow N$ represent a $2$-frame on $\Mbar$, then $\psi$ induces a local embedding of $P^1(\Mbar)$ in $P^1(N)$. Now, the first prolongation $f^{(1)}$ of $f$ induces a local diffeomorphism $\bar f$ between $T_{(0,I_n)}P^1(\R^n)= \mathfrak{a}(n;\R)$ and $T_{j^1_0f} P^1(N)$. Since $P^1(\Mbar)$ and $P^1(N)$ have same dimension, it follows that the embedding $\psi$ induces a linear isomorphism $w:T_{[j^1_0f]} P^1(\Mbar) \rightarrow T_{j^1_0 f}P^1(N)$. Expressing this fact in coordinates, one observes that the isomorphism:
$\tilde f: \mathfrak{a}(n;\R) \rightarrow T_{[j^1_0f]}P^1(\Mbar)$ defined by $\tilde f = \bar f\circ w^{-1}$, depends only on the $2$-frame that $f$ determines at $x\in \Mbar$, hence we define, for any $X\in T_{[j^2_0f]}P^2(\Mbar)$:
\[ \theta(X)=\tilde{f}^{-1}(p_{*}X), \]
where $p: P^2(\Mbar) \rightarrow P^1(\Mbar)$ is the canonical projection.

\subsection{Projective structures}\label{SectionDefinitionProjectiveStructure}
In order to introduce the definition of a projective structure on a manifold with boundary, let us first fix some conventions. Projective space is a homogeneous manifold $G/H$ with $G=\PGL(n;\R)=GL(n+1;\R)/Z$ where $Z$ is the centre of $GL(n+1;\R)$ consisting of the matrices $\{\lambda I_{n+1}, \lambda \neq 0\}$ and $H$ is the isotropy subgroup of the point with homogenous coordinates ${}^t\begin{pmatrix}0_{\R^n}&1\end{pmatrix}$:
\begin{equation}\label{StabiliserPointProjectiveGeometry} H=\left\{ \begin{pmatrix} A & 0 \\ \Upsilon & a \end{pmatrix}\in GL(n+1;\R)/Z \right\}. \end{equation}
An important fact shown in~\cite[\S6,Chapter IV]{Kobayashi:1995aa} is that the subgroup $H$ can be realised as the subgroup of the group $2$-frames at $0\in \R^n$, $G^2(\R^n)$ given by:
\[\left\{ \left(A^{i}_{j},-2A^i_{(j}\Upsilon_{k)} \right)\right\} \subset G^2(\R^n).\]
We define:
\begin{defi}\label{DefinitionProjectiveStructure}
A projective structure on $\Mbar$ is a global section $s$ of $q: P^2(\Mbar)/H \rightarrow \Mbar$.
\end{defi}
If $(x^i, u^i_j, u^i_{jk})$ are some canonical coordinates on $P^2(\Mbar)$ associated with a chart on $\Mbar$, then one can define local coordinates on $P^2(\Mbar)/H$ such that the projection map $q$ is represented by:
\[(x^i, u^i_j, u^i_{jk}) \mapsto (x^i, u^i_{lm}v^l_jv^m_k -\frac{2}{n+1} \delta^i_{(j}v^m_{k)}u^s_{lm}v^l_s),\]
where $u^i_kv^k_j=\delta^i_j$. Note that the second component is trace-free and symmetric in its lower indices.
It follows that a section $s$ is given locally by functions $x \mapsto z^{i}_{jk}(x)$. Under a change of chart $\phi=y\circ x^{-1}$, these transform according to
\[\tilde{z}^i_{jk} = \phi^i_{lm}(\phi^{-1})^l_j(\phi^{-1})^m_k + \phi^i_a z^a_{lm}(\phi^{-1})^l_k(\phi^{-1})^m_k - \frac{2}{n+1} \delta^i_{(j}(\phi^{-1})^s_{k)}\phi^l _{sa}(\phi^{-1})^a_l, \]
where we write $\phi^i_j = \pfrac{\phi^i}{x^j}\equiv \pfrac{y^i}{x^j}$ and $\phi^{i}_{jk}= \frac{\partial^2 \phi ^i}{\partial x^j\partial x^k}\equiv \frac{\partial^2 y^i}{\partial x^j \partial x^k}$.
Defining $\Pi^{i}_{jk}=-z^i_{jk}$ and similarly for the tilded quantities, and setting $(\phi^{-1})^i_j=\frac{\partial x^i}{\partial y^j}$, we see that the above can be rewritten:
\begin{equation}\label{ThomasChangeOfCoordinates} \tilde{\Pi}^i_{jk}=\pfrac{y^i}{x^l}\frac{\partial^2x^l}{\partial y^j\partial y^k}-\frac{2}{n+1}\pfrac{y^m}{x^l}\frac{\partial^2x^l}{\partial y^m\partial y^{(j}}\delta^i_{k)}+\pfrac{y^i}{x^l}\Pi^{l}_{\,sm}\pfrac{x^s}{y^j}\pfrac{x^m}{y^k}. \end{equation}
Using the definition~\eqref{ThomasInvariant} of the Thomas invariant of a class of projectively equivalent affine connections, it is straightforward to check that its components also under a change of coordinates according to Eq.(\ref{ThomasChangeOfCoordinates}). This justifies the notation $\Pi$ used above. We will thus continue to refer to this object as the Thomas invariant even without reference to an equivalence class of affine connections. 

It will sometimes be convenient to view $\Pi$ as a matrix-valued one form by setting:
\[\Pi^i_j= \Pi^i_{jk}\dd x^k. \]
A section $s$ defining a projective structure determines a reduction of $P^2(\Mbar)$ to a $H$-principal bundle $P$ defined by:
\begin{equation}\label{ProjectiveFrames} P= \{ q \in P^2(\Mbar), q\textrm{~mod~} H = s(\pi(q))\}.\end{equation}
This is again a manifold with boundary. Recall that a Cartan geometry modeled on $G/H$ consists in the following data:
\begin{itemize}
\item A principal $H$-bundle $\pi: P \rightarrow M$,
\item A $\mathfrak{g}$-valued differential form $\omega$ on $P$, known as a \emph{Cartan connection}, that satisfies:
\begin{enumerate}
\item $R_h^*\omega=\Ad_{h^{-1}}\omega$, with respect to the right action of $H$ on $P$.
\item $\omega(X^*)=X$, for any fundamental vector field $X^*$ on $P$ generated by $X\in \mathfrak{h}$.
\item $\omega_p: T_p P \longrightarrow \mathfrak{g} $ is a linear isomorphism at each $p$.
 \end{enumerate}
\end{itemize}
In the above $\mathfrak{g}$ (resp. $\mathfrak{h}$) is the Lie algebra of $G$ (resp. $H$). We recall that the curvature of the Cartan connection is the $\mathfrak{g}$-valued 2-form given by:
\[\Omega=\dd \omega + \frac{1}{2}[\omega,\omega], \]
with $[\omega,\omega](X,Y)=2[\omega(X),\omega(Y)]$. A Cartan geometry is said to be \emph{torsion-free} if its curvature $2$-form $\Omega$ is $\mathfrak{h}$-valued.  
For any local section $\sigma:  U \rightarrow P$ of $\pi: P \rightarrow M$, the pullback form $\omega_U=\sigma^*\omega$ is called a \emph{Cartan gauge} on $U$.
\begin{rema} These definitions are not specific to projective geometry, and apply in the case where $G$ is an arbitrary Lie group of which $H$ is a closed subgroup. The homogeneous space $G/H$ is then referred to as the \emph{model} for the Cartan geometry.\end{rema}
 In the specific case of $G=\PGL(n;\R)$ and $H$ given by~\eqref{StabiliserPointProjectiveGeometry} corresponding to projective geometry, one says that a Cartan geometry is \emph{normal} if it is torsion-free and the curvature function $K: P \rightarrow \Lambda^2(\mathfrak{g}/\mathfrak{h},\mathfrak{h})$ defined by $K(p)(u,v)=\Omega(\omega_p^{-1}u,\omega_p^{-1}v)$, takes its values in the kernel of the so-called projective Ricci morphism defined by the sequence of $H$-module morphisms in Figure~\ref{RicciDef}.
 \begin{figure}[h!]
\begin{center}
\begin{tikzcd} \wedge^2(\mathfrak{g}/\mathfrak{h},\mathfrak{h}) \arrow[r,swap,"\textnormal{proj.}"] & \wedge^2(\mathfrak{g}/\mathfrak{h},\mathfrak{h}/\ker\mathfrak{ad})\arrow[r,"\simeq"]&(\mathfrak{g}/\mathfrak{h})^*\wedge(\mathfrak{g}/\mathfrak{h})^*\otimes \textrm{End}(\mathfrak{g}/\mathfrak{h}) \arrow[d,"\simeq"]\\&(\mathfrak{g}/\mathfrak{h})^* \otimes (\mathfrak{g}/\mathfrak{h})^* & \arrow{l}[yshift=-2mm]{\textrm{contraction in the middle}}(\mathfrak{g}/\mathfrak{h})^*\wedge(\mathfrak{g}/\mathfrak{h})^*\otimes \mathfrak{g}/\mathfrak{h}\otimes(\mathfrak{g}/\mathfrak{h})^*\end{tikzcd}
\end{center}
\caption{\label{RicciDef}The projective Ricci morphism; $\mathfrak{ad}: \mathfrak{h} \rightarrow \textrm{End}(\mathfrak{g}/\mathfrak{h})$ denotes the Lie algebra representation induced by the adjoint action on $\mathfrak{g}/\mathfrak{h}$. }
\end{figure}

The following result extends to our setting: 
\begin{prop}\label{NormalCartanManifoldWithBoundary}
There is a unique normal projective Cartan connection\footnote{The decomposition is with respect to the standard $|1|$-grading:
$\mathfrak{g}=\R^n \oplus \mathfrak{gl}(n,\R) \oplus (\R^n)^*$} $\omega=(\omega^i, \omega^i_j, \omega_j)$ on the bundle $P$ defined in Equation~\eqref{ProjectiveFrames}, such that the components $(\omega^i, \omega^i_j)$ are given by the pullback of the canonical form to $P$.
\end{prop}
\begin{proof}
If such a connection exists, then it is uniquely determined by the induced projective structure on the interior $M$ by density of $M$ in $\Mbar$. To establish existence, let $(U,x)$ be a neighbourhood chart of a point $p$, and $\tilde{s}$ denote the expression of $s$ in this chart. By definition of smoothness the section $\tilde{s}$ determines a section of $P^2(\tilde{U})/H$ where $\tilde{U}$ is an open neighbourhood of $x(U)$ in $\R^n$.  This determines a projective structure on $\tilde{U}$ admitting a unique normal Cartan connection satisfying the requirements of the proposition. Since the set of interior points in $U$ map to a dense set $U$, the restriction of the Cartan connection on $\tilde{U}$ to $x(U)$ is completely determined by the unique normal Cartan connection on $\textrm{int}~U$ and therefore is independent of the choice of extension. This proves local existence, a standard partition of unity arguments gives global existence. 
\end{proof}

{For completeness, we close this subsection with a justification that the standard link between projective structures and equivalence classes of torsion free affine connections carries over without major modification to the case of manifolds with boundary. For the sake of brevity, we shall model our discussion on the exposition given in~\cite[\S7, Chapter IV]{Kobayashi:1995aa} for manifolds without boundary and observe that the main arguments given there extend without essential modification. 

\begin{defi}
A torsion-free affine connection on a manifold with boundary is a section $s: \Mbar \rightarrow P^2(\Mbar)/GL_n(\R)$, where $GL_n(\R)$ is identified with the subgroup $\{(A^i_j, 0)\} \subset G^2(\R^n)$.
\end{defi}
The fact that this definition is a meaningful extension of the usual notion is justified, possibly with minor modifications, by~\cite[Proposition 7.1, Chapter IV]{Kobayashi:1995aa}. As for projective structures, if $(x^i, u^i_j, u^i_{jk})$ are canonical coordinates on $P^2(\Mbar)$ associated with a chart, then we can define a coordinate system on $P^2(\Mbar)/GL_n(\R)$ such that the canonical projection is represented by: 
\[ (x^i, u^i_{j}, u^{i}_{jk}) \mapsto (x^i, u^i_{pq}v^p_j v^q_k),\]
where $u^i_kv^k_j=\delta^i_j$.  Hence, sections of the canonical projection are described by functions $z^i_{jk}$ and one can check  that $\Gamma^i_{jk}=-z^i_{jk}$ transform under change of coordinates like the Christoffel symbols. Every affine connection, in the above sense,  also defines a projective structure in the sense of Definition~\ref{DefinitionProjectiveStructure} and two connections $s,\hat s$  define the same projective structure if and only if their respective Christoffel symbols are locally related by:
\[ \hat{\Gamma}^i_{jk}-\Gamma^i_{jk}=2\delta^i_{(j} \Upsilon_{k)}\]
for some one form $\Upsilon$, which is the local expression of Fact~\ref{FactProjectiveEquivalence}.
Finally, that all projective structures can be seen to arise in this way can be justified by reversing the relation in Eq.~\eqref{ThomasInvariant}, choosing an arbitrary $1$-form for the trace $\alpha^k_k$ of the connection form $\alpha^i_j=\Gamma^i_{jk}\textrm{d}x^k$. We summarise this in the following:
\begin{prop}
Projective structures on manifolds with boundary are in bijective correspondence with equivalence classes of projectively equivalent affine connections.
\end{prop}
}

\subsection{Automorphisms}
\begin{defi}
An automorphism of a projective structure $s: \Mbar \rightarrow P^2(\Mbar)/H$ is a diffeomorphism $\phi: \Mbar \rightarrow \Mbar$ such that its prolongation to $P^2(\Mbar)$ preserves the subbundle $P$ determined by $s$.
\end{defi}
Since the action of the prolongation of $\phi$ to $P^2(\Mbar)$, commutes with the right action of $H$, it follows that one can define a diffeomorphism $\tilde{\phi}$ by means of the commutative diagram below:
\begin{center}
\begin{tikzcd}
P^2(\Mbar) \arrow[d] \arrow[r, "\phi^{(2)}"] & P^2(\Mbar) \arrow[r] & P^2(\Mbar)/H\\
P^2(\Mbar)/H \arrow[rru,dashed, "\tilde{\phi}"] &&
\end{tikzcd}
\end{center}
and we may restate the condition defining an automorphism $\phi$ of $s$ as  
\[ \tilde{\phi}^*s =s. \] 
Locally, this reproduces Lemma~\ref{ThomasProjectiveTransformation}. 
Just as for manifolds without boundary, it follows from the above definitions that a projective transformation of a projective structure on $\Mbar$ is an automorphism of the corresponding normal Cartan geometry given by Proposition~\ref{NormalCartanManifoldWithBoundary}; which we recall defines, by the Cartan connection, an absolute parallelism on $P$.

The following statements are extensions of~\cite[Theorems 3.1,3.2, \S 3, Chapter I]{Kobayashi:1995aa} to the setting of manifolds with boundary.
\begin{prop}\label{Kobayashi31}
Let $G$ be a group of differentiable transformations of a manifold with boundary $\Mbar$, let $S$ be the set of all vector fields $X$ on $\Mbar$ which generate global 1-parameter groups of transformations $\phi_t$ on $\Mbar$ such that $\phi_t\in G$. If the set $S$ generates a finite-dimensional Lie algebra of vector fields on $\Mbar$, then $G$ is a Lie transformation group and $S$ is the Lie algebra of $G$. 
\end{prop}
\begin{proof}
The following observations justify that the proof given in~\cite[\S3, Chapter I]{Kobayashi:1995aa} extends without modification to the present setting. Indeed, we note that $S$ is necessarily composed of vector fields $X$ on $\Mbar$ that are tangent to the boundary $\partial M$, i.e. for every $p \in \partial M$, $X_p \in T_p\partial M \subset T_p \Mbar$. By naturality of the Lie bracket, it follows that this tangency is stable when taking the Lie bracket closure of $S$. This means that the flow theorem can be applied in the form given by~\cite[Theorem 9.34]{Lee:2003aa} to any vector field in the Lie algebra generated by $S$. 
\end{proof}

\begin{prop}
Let $\Mbar$ be a connected manifold with boundary endowed with a $\{1\}$-structure, i.e. an absolute parallelism, then the group of automorphisms $G$ of the $\{1\}$-structure is a Lie transformation group of dimension $\dim G  \leq n= \dim \Mbar$. 
\end{prop}
\begin{proof}
A $\{1\}$-structure amounts to the existence of $n=\dim \Mbar$ everywhere linearly independent vector fields $e_1,\dots,e_n$ on $\Mbar$. Let $\bar{\mathfrak{l}}$ the set of vector fields $X$ that are tangent to $\partial M$ and such that $[X,e_i]=0$ for every $i \in \{1,\dots,n\}$, as the intersection of Lie algebras this is a Lie algebra. 
Since $M$ is dense in $\Mbar$, the restriction map $X \mapsto X|_M$ is injective and maps $\bar{\mathfrak{l}}$ into the Lie algebra $\mathfrak{l}$  of vectors fields on $M$ that commute with each of the vectors fields  $(e_i|_M)$ that define an absolute parallelism of $\Mbar$.
By~\cite[Lemma 3, \S 3 Chapter I]{Kobayashi:1995aa}, $\textrm{dim}~\mathfrak{l} \leq \textrm{dim}~M=\textrm{dim}~\Mbar$, it follows that the same is true of $\bar{\mathfrak{l}}$.
Now let $S$ be the set of vector fields in $\bar{\mathfrak{l}}$ that generate a global $1$-parameter of transformations of $\Mbar$ (recall that the elements of $\bar{\mathfrak{l}}$ are assumed to be tangent to the boundary), by the preceding argument the Lie algebra it generates is therefore of finite dimension, and one can apply Proposition~\ref{Kobayashi31}.
\end{proof}
\begin{rema} 
A $\{1\}$-structure on $\Mbar$ should be thought of as a global section of the first order frame bundle $P^1(\Mbar)$. However, it is possible that no \emph{constant} linear combination of the vector fields composing the global frame is tangent to the boundary at more than one point. %
\end{rema}

\begin{coro}
The group of automorphisms of a projective structure on a manifold with boundary is a Lie group of dimension $\leq n(n+2)=\textnormal{dim}~\PGL(n;\R)$.
\end{coro}

\begin{rema}
We do not expect this upper bound to be sharp; the presence of the boundary will likely impact the dimension of the automorphism group in this case. This has already been observed in case of Riemannian metrics~\cite{Bagaev2003,Chen2010} where the authors show that the automorphism group has at most dimension $\frac{1}{2}n(n-1)$ (as opposed to $\frac{1}{2}n(n+1)$). It should be noted however that the authors consider diffeomorphisms of the canonical \emph{orbifold} structure induced by the manifold with boundary structure; which is a priori a slightly more restrictive notion of diffeomorphism. As an example in the projective case, one may seek all homographies of $n$-dimensional real projective space that restrict to automorphisms of a half-space (see Example~\ref{ExNonRigid}), then one can observe that these are parametrised by matrices of the form:
\[\begin{pmatrix} \lambda & 0 & 0 \\ b & A & w \\ 0 & 0 & a  \end{pmatrix} \mod Z , \quad w,b\in \R^{n-1}, \lambda, a\neq 0, A \in GL_{n-1}(\R), \]
i.e. there are $(n-1)^2+2(n-1)+1=n^2<n(n+2)=\textnormal{dim}~\PGL(n;\R)$ parameters.
\end{rema}
We now move on to a key result:
\begin{prop}\label{PropKobayashi}
Let $G$ be the automorphism group of a $\{1\}$-structure on a connected manifold with boundary $\Mbar$, then for every $p\in \Mbar$ the map: ${\phi}\in G\mapsto {\phi}(p)$ is injective.
\end{prop}
\begin{proof}
{The proof of Proposition~\ref{PropKobayashi} amounts to showing that if $\phi \in G$ has a fixed point, then $\phi$ must be the identity map on $\overline{M}$.}
First let $p\in M$ and suppose ${\phi}(p)=p$ then the restriction of ${\phi}$ to $M$ is an automorphism of the induced $\{1\}$-structure on $M$, hence, ${\phi}|_M = \textrm{id}_M$ by~\cite[Theorem 3.2]{Kobayashi:1995aa}. (Observe that the interior of a connected manifold with boundary is itself connected). By density of $M$ in $\Mbar$ we therefore conclude that ${\phi}=\textrm{id}_{\Mbar}$.

Now let $p\in \partial M$ and assume that ${\phi}(p)=p$; it is sufficient to show that ${\phi}(q)=q$ for at least one interior point. Let $v$ be a constant linear combination of the absolute parallelism $(e_1,\dots, e_n)$ that is inward pointing at $p$. By continuity, it remains inward pointing in a small neighbourhood of $p$. Let $\gamma : [0,\varepsilon) \rightarrow \Mbar$ be an integral curve\footnote{Since we are working locally, we may assume that $v$ is extended to an open neighborhood $\tilde V$ of a manifold without boundary containing the image of a neighborhood $V$ of $p\in \bar M$ through an embedding. On $\tilde V$, we may apply the standard existence and uniqueness theorem for ODEs and note that the restriction of the solution $\gamma$ to $V$ is independent of the extension.} with initial condition $\gamma(0)=p$, and consider the curve ${\phi}(\gamma)$. Then since ${\phi}_*v=v$ by assumption, we have \[\frac{d}{dt} {\phi}(\gamma(t))={\phi}_{*\gamma(t)}\dot \gamma(t)= {\phi}_{*\gamma(t)}v_{\gamma(t)}=v_{{\phi}(\gamma(t))}, \quad {\phi}(\gamma(0))=\gamma(0)=p. \] It follows by uniqueness of integral curves that ${\phi}(\gamma(t))=\gamma(t)$. Since $v$ is inward pointing, there exists at least one fixed point in the interior of $\bar M$.
\end{proof}
\begin{coro}\label{2JetDeterminesPhi}
An automorphism of a projective structure is determined by the $2$-jet at a single point $p\in\Mbar$.
\end{coro}
\begin{proof}
Let $\phi$ be a projective automorphism then its prolongation $\phi^{(2)}$ to $P^2(\Mbar)$ is an automorphism of the $\{1\}$-structure given by the corresponding normal Cartan connection on the subbundle $P$ defined by Equation~\eqref{ProjectiveFrames}. Hence, {by Proposition~\ref{PropKobayashi}}, it is determined on $P$ by the value it takes on a single $2$-frame. In canonical coordinates, one can consider the frame $p$ with coordinates $(x^i,\delta^i_j, \Pi^{i}_{jk})$. The image of this frame under $\phi^{(2)}$ is given by $((\phi(x))^i,\pfrac{\phi^i}{x^j},\frac{\partial^2\phi^i}{\partial x^j \partial x^k}+\pfrac{\phi^i}{x^m}\Pi^{m}_{jk})$ where $x^i$ are the coordinates of an arbitrary point $x\in \Mbar$. Now, $\phi^{(2)}(p)=\tilde {\phi}^{(2)}(p)$ if and only if $\phi$ and $\tilde \phi$ have the same $2$-jet at $x$.
\end{proof}

\section{Proof of the rigidity theorem}
This section is devoted to the proof of Theorem~\ref{ThmRigidity}. We assume that $\Mbar$ is equipped with a fixed  projective structure. %
The strategy of our proof follows closely that of~\cite{WRBLionheart_1997} and relies on Corollary~\ref{2JetDeterminesPhi}. %
Our main objective is to determine at a fixed boundary point $p$ the $2$-jet of a projective transformation $\phi$ that satisfies $\phi|_{\partial M}=\id_{\partial M}$. Since $\phi(p)=p$ we can work in a fixed boundary chart $(U,(r,y))$. Rescaling $r$ by a constant if necessary, we can assume that up to order $2$ in $r$, $\phi$ has the following expression in the chart:
\begin{equation}\label{2jetDiffeo} \begin{cases}\phi^0(r,y)=r+a(y)r^2+o(r^2), \\ \phi^\mu(r,y) = y^\mu + b^{\mu}(y)r+c^{\mu}(y)r^2 + o(r^2).\end{cases} \end{equation}
The goal is to show that $a,b^\mu,c^\mu$ vanish at $p$. 

Observe that in light of Equation~\eqref{2jetDiffeo} at boundary points we have:
\begin{equation}\label{gAtBoundaryPoint} \left(\pfrac{\phi^i}{x^j}\right)=\begin{pmatrix}1 & 0 \\ b^\alpha & \delta^\alpha_\beta \end{pmatrix}, \quad \left(\pfrac{\phi^i}{x^j}\right)^{-1}\equiv\left(\pfrac{x^i}{\phi^i}\right)=\begin{pmatrix}1 & 0 \\ -b^\alpha & \delta^\alpha_\beta \end{pmatrix}\end{equation}

\begin{prop}\label{PropEquationsABC}
The functions $a, b^\mu, c^\mu$ satisfy the following system:
\begin{equation}\label{SystemGeneral}\begin{cases}b^\mu\Pi^0_{\,\nu \tau}=0, \\ \partial_\nu b^\mu -\frac{\delta^\mu_\nu}{n+1}(2a+\partial_\tau b^\tau)=b^\mu\Pi^0_{0\nu}-\Pi^\mu_{\nu\tau}b^\tau,\\ -2b^\mu a +2c^\mu - b^\mu\Pi^0_{00} - 2b^\mu\Pi^0_{0\nu}b^\nu + 2 \Pi^\mu_{\nu 0}b^\nu + \Pi^{\mu}_{\sigma \nu}b^\sigma b^\nu =0. \end{cases} \end{equation}
\end{prop}
Before we proceed to prove these relations, it follows immediately that:
\begin{coro}
\label{VanishingCoefficients}
If there is a boundary point such that at that point $\Pi^0_\nu(p) \notin \textnormal{span}(\dd r(p))$ for at least one $\nu$ then, at that point:
\[a = b^\mu = c^\mu =0 \]
\end{coro}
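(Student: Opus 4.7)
My plan is to extract successively the vanishing of $b^\mu$, $a$, $c^\mu$, and $\Upsilon_i$ at $p$ by feeding the hypothesis into the three equations of Proposition~\ref{PropEquationsABC}. Writing $\alpha^0_\nu=\alpha^0_{\nu 0}\dd r+\alpha^0_{\nu\tau}\dd y^\tau$, the assumption $\alpha^0_\nu(p)\notin\vspan(\dd r(p))$ translates into the existence of indices $\nu_0,\tau_0\in\{1,\dots,n-1\}$ with $\alpha^0_{\nu_0\tau_0}(p)\neq 0$. By continuity of the Christoffel symbols up to the boundary, this non-vanishing persists on some open neighbourhood $V\subset\partial M$ of $p$, and the first line of System~\eqref{SystemGeneral}, being an equation for each triple of indices, then forces $b^\mu\equiv 0$ on $V$ for every $\mu$.

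The next step is to exploit that $b^\mu$, as a coefficient in the Taylor expansion~\eqref{2jetDiffeo} of $\phi^\mu$ at $r=0$, depends only on the tangential coordinates $y$, so its tangential derivatives $\partial_\nu b^\mu$ also vanish throughout $V$. Evaluating the second equation of System~\eqref{SystemGeneral} at $p$, every term involving $b$ disappears and what remains is $0=\delta^\mu_\nu a(p)$; taking $\mu=\nu$ yields $a(p)=0$. Substituting $a(p)=0$ and $b^\mu(p)=0$ into the third equation of System~\eqref{SystemGeneral} kills every term except $2c^\mu(p)$, giving $c^\mu(p)=0$ as well. The formulas for $\Upsilon$ at the end of Proposition~\ref{PropEquationsABC} then immediately produce $\Upsilon_0(p)=\Upsilon_\mu(p)=0$.

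The only point requiring care is that the vanishing of $b^\mu$ must be propagated to a full neighbourhood of $p$ in $\partial M$ \emph{before} one differentiates, since the value of $\partial_\nu b^\mu(p)$ cannot be inferred from the first equation at the single point $p$ alone. This is precisely where the open condition $\alpha^0_{\nu_0\tau_0}(p)\neq 0$, together with the continuity of the connection components at boundary points, is used; the remainder of the argument is purely algebraic substitution.
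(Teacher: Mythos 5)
Your proof is correct and follows the same route the paper intends (the paper simply asserts that the corollary ``follows immediately'' from Proposition~\ref{PropEquationsABC}): the first equation of System~\eqref{SystemGeneral} kills $b^\mu$, the second then yields $a=0$, the third yields $c^\mu=0$, and the formulas for $\Upsilon$ finish the argument. Your observation that $b^\mu\equiv 0$ must first be propagated to a boundary neighbourhood of $p$ --- via the openness of the condition $\alpha^0_{\nu_0\tau_0}\neq 0$ and the continuity of the connection components up to the boundary --- before $\partial_\nu b^\mu(p)$ can be evaluated is a genuine point of care that the paper leaves implicit, and you handle it correctly.
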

Theorem~\ref{ThmRigidity} follows:
\begin{proof}[Proof of Theorem~\ref{ThmRigidity}]
It follows from Corollary~\ref{VanishingCoefficients} that at the point $p\in \partial M$ satisfying the hypotheses of the Theorem, the $2$-jet of the diffeomorphism coincides with that of the identity map. By Corollary~\ref{2JetDeterminesPhi}, it follows that it must be the identity map.
\end{proof}
The rest of the section is devoted to the calculations that prove Proposition~\ref{PropEquationsABC}. First, under a diffeomorphism, $\Pi^i_{jk}$ transforms according to:
\begin{equation*}\tilde{\Pi}^i_{jk}(x)= \pfrac{x^i}{\phi^l}\frac{\partial^2\phi^l}{\partial x^j \partial x^k} -\frac{2}{n+1} \pfrac{x^m}{\phi^l}\frac{\partial^2\phi^l}{\partial x^m \partial x^{(j}}\delta^i_{k)}+\pfrac{x^i}{\phi^l} \Pi^{l}_{sm}(\phi(x)) \pfrac{\phi^s}{x^j}\pfrac{\phi^m}{x^k} \end{equation*}
Using Lemma~\ref{ThomasProjectiveTransformation} and the assumption that $\phi$ restricts to the identity at boundary points, it follows that at any \emph{boundary point}:
\begin{equation}\label{ProjectiveTransfComp}\pfrac{x^i}{\phi^l}\frac{\partial^2\phi^l}{\partial x^j \partial x^k} -\frac{2}{n+1} \pfrac{x^m}{\phi^l}\frac{\partial^2\phi^l}{\partial x^m \partial x^{(j}}\delta^i_{k)}+\pfrac{x^i}{\phi^l} \Pi^{l}_{sm} \pfrac{\phi^s}{x^j}\pfrac{\phi^m}{x^k}=\Pi^i_{jk}. \end{equation}
We must now study what Equation~\eqref{ThomasChangeOfCoordinates} means for the different values of $i,j,k$.
We recall that $\Pi^i_{jk}$ is symmetric in the lower indices and trace-free.
\subsection*{Cases where $i=0$}
Let us consider the case $j=k=0$, then~\eqref{ProjectiveTransfComp} when evaluated at the boundary point $p$:
\[2a -\frac{2}{n+1}\left(2a +\partial_\alpha b^\alpha \right)+\Pi^0_{00} + 2\Pi^0_{0\mu}b^\mu + \Pi^0_{\mu\nu}b^\mu b^\nu =\Pi^0_{00}, \]
this can be rewritten:
\begin{equation}\label{Eq1T} \partial_\alpha b^\alpha = (n-1)a +(n+1)\Pi^0_{0\mu}b^\mu +\frac{(n+1)}{2}\Pi^0_{\mu\nu}b^\mu b^\nu. \end{equation}
In the cases $\{j,k\}=\{0,\mu\}$ the left-hand side of~\eqref{ProjectiveTransfComp} expands to:
\[\frac{\partial^2 \phi^0}{\partial x^\mu \partial x^0} - \frac{2}{n+2}\left(\frac{1}{2}\frac{\partial^2\phi^0}{\partial x^0 \partial x^\mu}-\frac{1}{2}b^\nu \frac{\partial^2\phi^0}{\partial x^\nu \partial x^\mu} + \frac{1}{2}\frac{\partial^2\phi^\nu}{\partial x^\nu \partial x^\mu}\right)+\Pi^{0}_{\mu 0}+ \Pi^{0}_{\mu \nu}b^\mu. \]
The first two terms vanish at boundary points and it follows that:
\begin{equation}\label{Eq2T} \Pi^0_{\mu \nu}b^\nu =0. \end{equation}
The case $i=\mu, j=\nu$ reduces to a trivial equation.
\subsection*{Cases where $i=\mu$}
When $j=k=0$ we obtain:
\[-2 a b^\mu + 2c^\mu -b^\mu\left( \Pi^0_{00} + 2\Pi^0_{0\nu}b^\nu + \Pi^0_{\mu\nu}b^\mu b^\nu\right) + \Pi^{\mu}_{00} + 2 \Pi^{\mu}_{0 \nu}b^\nu +\Pi^{\mu}_{\sigma\nu}b^\sigma b^\nu=\Pi^{\mu}_{00}.\]
Using~\eqref{Eq2T} this affords a minor simplification and yields:
\begin{equation}\label{Eq3T} 2c^\mu - 2b^\mu a= b^\mu\Pi^{0}_{00} +2b^\mu \Pi^{0}_{0\nu}b^\nu -2\Pi^{\mu}_{\nu 0}b^\nu -\Pi^{\mu}_{\sigma \nu}b^\sigma b^\nu. \end{equation}
The cases $\{j,k\}=\{\nu,0\}$ give:
\[ \partial_\nu b^\mu -\frac{2 \delta^\mu_\nu}{n+1}\left( a +\frac{1}{2} \partial_\sigma b^\sigma \right)-b^\mu\Pi^0_{\nu0} -b^\mu \Pi^0_{\nu\sigma}b^\sigma +\Pi^\mu_{\nu0} + \Pi^\mu_{\nu \sigma}b^\sigma=\Pi^{\mu}_{\nu0}, \]
hence:
\begin{equation}\label{Eq4T} \partial_\nu b^\mu = \frac{\delta^\mu_\nu}{n+1}\left(2a +\partial_\tau b^\tau\right)+b^\mu\Pi^0_{\nu0}- \Pi^\mu_{\nu\sigma}b^\sigma.\end{equation}
Using that: $0=\Pi^i_{i\sigma}= \Pi^0_{0\sigma} + \Pi^{\mu}_{\mu \sigma}$, one may observe that Eq.~\eqref{Eq1T} is simply the trace of this equation and may be discarded.
Finally, what turns out to be the key case, $j=\nu, k=\tau$:
\[\begin{split}-b^\mu\frac{\partial \phi^0}{\partial x^\nu \partial x^\tau} + \frac{\partial^2 \phi^\mu}{\partial x^\nu \partial x^\tau}-\frac{2}{n+1}\left( \frac{\partial^2\phi^0}{\partial x^0 \partial x^{(\nu}}\delta^\mu_{\tau)}-b^\lambda \frac{\partial^2\phi^0}{\partial x^\lambda \partial x^{(\nu}}\delta^\mu_{\tau)} + \frac{\partial^2 x^\lambda}{\partial x^\lambda \partial x^{(\nu}}\delta^\nu_{\tau)} \right) \\ -b^\mu\Pi^0_{\nu\tau} +\Pi^\mu_{\nu\tau}=\Pi^\mu_{\nu\tau}. \end{split} \]
At boundary points, all the terms on the first line vanish and it follows that:
\begin{equation}\label{Eq5T} b^\mu \Pi^0_{\nu\tau}=0. \end{equation}
Once more, we can observe that~\eqref{Eq2T} is the trace of~\eqref{Eq5T} and may be discarded so that the independent equations are Eqs.~\eqref{Eq3T},\eqref{Eq4T} and ~\eqref{Eq5T} which leads to the desired system.\qed
\section{Non-rigidity}\label{NonRigidity}
\subsection{First steps}
Equation~\eqref{Eq5T} is quite powerful since, as we have already remarked, when any of the forms $\Pi^0_{\nu}$ are non-vanishing the whole system collapses and is immediately solved. For our purposes, we would only need to find one boundary point and one boundary chart at which this holds and the problem is solved. We first show that the condition is chart independent.

\begin{lemm}\label{CoordinateIndependanceLemma}
Let $p_0\in \partial M$. Suppose that $\Pi^0_{\nu\mu}$ vanishes at $p_0$ for every $\nu,\mu$ in some chart, then it must vanish in every chart. 
\end{lemm}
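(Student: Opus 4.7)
My plan is to verify the claim by directly applying the classical Christoffel transformation law at $p_0$ under a change of boundary charts $(r,y)\mapsto(\tilde r,\tilde y)$, and to exploit the fact that the two defining functions $r$ and $\tilde r$ have the same zero set $\partial M$ near $p_0$.

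First I would record the elementary boundary-point identities that this symmetry forces. Since $r$, viewed as a function of $(\tilde r,\tilde y)$, vanishes identically on $\{\tilde r=0\}$, all of its tangential derivatives along the boundary vanish at $p_0$: in particular $\pfrac{r}{\tilde y^\mu}\big|_{p_0}=0$ and $\frac{\partial^2 r}{\partial\tilde y^\nu \partial\tilde y^\mu}\big|_{p_0}=0$. Symmetrically, $\pfrac{\tilde r}{y^\mu}\big|_{p_0}=0$. On the other hand, both $r$ and $\tilde r$ being defining functions forces $\pfrac{\tilde r}{r}\big|_{p_0}\neq 0$, and the boundary-restricted Jacobian $\bigl(\pfrac{y^\sigma}{\tilde y^\nu}\bigr)\big|_{p_0}$ is invertible, being the derivative of the diffeomorphism induced on $\partial M$.

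Next I would plug these into the transformation law
$$\tilde\alpha^0_{\nu\mu} \;=\; \pfrac{\tilde r}{x^l}\,\pfrac{x^m}{\tilde y^\nu}\,\pfrac{x^n}{\tilde y^\mu}\,\alpha^l_{mn} \;+\; \pfrac{\tilde r}{x^l}\,\frac{\partial^2 x^l}{\partial\tilde y^\nu\partial\tilde y^\mu}$$
evaluated at $p_0$. The sums collapse drastically: only $l=0$ survives in the factor $\pfrac{\tilde r}{x^l}$; only Greek indices $m=\sigma,\,n=\tau$ survive in the tangential differentiations $\pfrac{x^m}{\tilde y^\nu}\pfrac{x^n}{\tilde y^\mu}$; and the entire inhomogeneous piece disappears, since its only potentially surviving summand $\pfrac{\tilde r}{r}\cdot\frac{\partial^2 r}{\partial\tilde y^\nu\partial\tilde y^\mu}\big|_{p_0}$ is killed by the second identity above. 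What remains is the clean tensorial relation
$$\tilde\alpha^0_{\nu\mu}\big|_{p_0} \;=\; \pfrac{\tilde r}{r}\,\pfrac{y^\sigma}{\tilde y^\nu}\,\pfrac{y^\tau}{\tilde y^\mu}\,\alpha^0_{\sigma\tau}\big|_{p_0},$$
from which the lemma follows immediately: the non-zero scalar prefactor and the invertibility of $\bigl(\pfrac{y^\sigma}{\tilde y^\nu}\bigr)\big|_{p_0}$ give $\alpha^0_{\sigma\tau}(p_0)=0$ for all $\sigma,\tau$ if and only if $\tilde\alpha^0_{\nu\mu}(p_0)=0$ for all $\nu,\mu$.

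I do not anticipate any real obstacle. The content of the lemma is that the inhomogeneous term of the Christoffel transformation law, which normally prevents $\alpha^0_{\nu\mu}$ from being tensorial, is \emph{suppressed at boundary points} precisely because $r$ vanishes to first order along $\partial M$. The only care required is systematic book-keeping, using the boundary vanishing identities above to discard every spurious term in the transformation formula.
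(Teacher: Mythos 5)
Your proof is correct and follows essentially the same route as the paper: apply the Christoffel transformation law to $\tilde\alpha^0_{\nu\mu}$ and use that tangential first and second derivatives of a boundary defining function vanish along $\partial M$ to kill all terms except $c\,\pfrac{y^\sigma}{\tilde y^\nu}\pfrac{y^\tau}{\tilde y^\mu}\,\alpha^0_{\sigma\tau}$. If anything, you make explicit the vanishing of $\frac{\partial^2 r}{\partial\tilde y^\nu\partial\tilde y^\mu}$ at boundary points, a step the paper passes over more quickly.
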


\begin{proof} Let the hypotheses of the Lemma be satisfied in some chart $(x,U)$ and let $(\bar{x},V)$ be another boundary chart near $p_0$. Let us write, $\bar{x}^0=\bar{r}$ and $\bar{x}^\mu=\bar{y}^\mu$. Note that $\bar{r}$ is a boundary defining function for the boundary on $V$ and, {\it a fortiori}, on $U\cap V$. Recall that the Thomas invariant transforms under change of coordinates according to
\[ \bar{\Pi}^i_{\, jk}= \pfrac{\bar{x}^i}{x^l}\frac{\partial^2x^l}{\partial \bar{x}^j\partial \bar{x}^k}-\frac{2}{n+1}\pfrac{\bar{x}^m}{x^l}\frac{\partial^2x^l}{\partial\bar{x}^m\partial\bar{x}^{(j}}\delta^i_{k)}+\pfrac{\bar{x}^i}{x^l}\Pi^{l}_{\,sm}\pfrac{x^s}{\bar{x}^j}\pfrac{x^m}{\bar{x}^k}. \]
We are interested in the case $i=0, j=\mu, k=\nu$; the middle term vanishes because of the Kronecker delta, then the rest can be expanded:
\[\begin{aligned}
\bar{\Pi}^0_{\, \mu\nu}&= \pfrac{\bar{x}^0}{x^l}\frac{\partial^2x^l}{\partial \bar{x}^\mu\partial \bar{x}^\nu}+\pfrac{\bar{x}^0}{x^l}\Pi^{l}_{\,sm}\pfrac{x^s}{\bar{x}^\mu}\pfrac{x^m}{\bar{x}^\nu},\\&=\pfrac{\bar{r}}{r}\frac{\partial^2r}{\partial\bar{y}^\mu\partial\bar{y}^\nu}+\pfrac{\bar{r}}{y^\beta}\frac{\partial^2 y^\beta}{\partial\bar{y}^\mu\partial\bar{y}^\nu} + \pfrac{\bar{r}}{r}\left(\Pi^0_{\,00}\pfrac{r}{\bar{y}^\mu}\pfrac{r}{\bar{y}^\nu}+\Pi^0_{\,0\beta}\pfrac{r}{\bar{y}^\mu}\pfrac{y^\beta}{\bar{y}^\nu}+\Pi^0_{\,\beta 0}\pfrac{y^\beta}{\bar{y}^\mu}\pfrac{r}{\bar{y}^\nu} \right)\\&\hspace{1cm}+\pfrac{\bar{r}}{r}\Pi^0_{\,\beta\sigma}\pfrac{y^\beta}{\bar{y}^\mu}\pfrac{y^\sigma}{\bar{y}^\nu}+\pfrac{\bar{r}}{y^\beta}\Pi^\beta_{\,sm}\pfrac{x^s}{\bar{x}^j}\pfrac{x^m}{\bar{x}^k}.
\end{aligned}
 \]
Now since both $r$ and $\bar{r}$ are boundary defining functions on $U\cap V$ there is a non-vanishing function $y\mapsto c(y)$ such that:
\[ \dd \bar{r} = c \dd r,\]
holds at boundary points.
This implies that $\displaystyle \lim_{r\to 0} \pfrac{\bar{r}}{y^\mu}=0$ and $\displaystyle \lim_{\bar{r}\to 0} \pfrac{r}{\bar{y}^\mu}=0.$
Hence taking the limit in the above equation we get, at boundary points:

\[\bar{\Pi}^0_{\mu\nu} = c \Pi^0_{\beta \sigma} \pfrac{y^\beta}{\bar{y}^\mu}\pfrac{y^\sigma}{\bar{y}^\nu}.\]
Therefore, if $\Pi^0_{\beta\sigma}$ vanishes for every $\beta,\sigma$ the same holds for $\bar{\Pi}^0_{\mu\nu}.$
\end{proof}

It is apparent that we have no choice but to consider the other equations of~\eqref{SystemGeneral} in this case. To get an idea of what is going on, let us see if we can solve them and prove rigidity in the extreme case where $\Pi$ vanishes at every boundary point. The system becomes very manageable:
\[\begin{cases}\partial_\nu b^\mu = \delta^\mu_\nu a,\\c^\mu=a b^\mu,
 \end{cases} \]
 and is readily solved to yield \emph{non-trivial} solutions parametrised by a constant $a \in \R$
 \[ \begin{cases} b^\mu = a y^\mu +k^\mu, \\ c^\mu= a b^\mu. \end{cases} \]

The following example shows a simple case in which some of these solutions can be achieved and that our search for rigidity must end here.
\begin{exam}\label{ExNonRigid} 
Let us consider an example in the projective plane $\RP^2$.
Take the boundary to be defined in homogenous coordinates $[X,Y,Z]$ by $\{XZ =0 \}$, in the affine chart sending $\{Z=0\}$ to infinity, we have a line and we can define a manifold with boundary by sitting on one side, say $x>0$ in the affine coordinates $(x,y)$ of this affine part of the projective plane. The flat connection is well defined on the boundary and vanishes there like everywhere else.
The projective transformations defined by matrices of the form
\[ M = \begin{pmatrix} 1 & 0 & 0 \\ \beta & 1& 0 \\ 0 & 0 & 1 \end{pmatrix},\]
are projective transformations that are tangent to the identity at the boundary but are not the identity map. 
They are the only %
projective transformations that can be obtained by restriction of elements of the projective group to our manifold.
In affine coordinates:
\[ (x,y) \overset{\phi}\longmapsto \left(x, \beta x + y\right),\]
\end{exam}

This example motivates us to give further attention to the geometry of the non-rigid boundary, which is the object of the next section of our paper.
\subsection{Cartan geometry at the boundary in the case of non-rigidity.}\label{ssec:bCartanGeometry}
{Recall from \cite[Chapter 5; Definition 1.2, Proposition 2.5]{Sharpe:1997aa} (see also the discussion on Cartan geometries in Section~\ref{SectionDefinitionProjectiveStructure}) that if $\sigma : U \subset \Mbar \rightarrow P$ is a local section of a Cartan geometry $(P,\omega)$, then the pullback of the Cartan connection, $\sigma^*\omega\equiv\omega_U$ is referred to as a Cartan gauge.} The normal Cartan geometry associated with a class of projectively equivalent {torsion-free affine} connections admits Cartan gauges of the form:
\begin{equation}\label{LocalNormalCartanConnection}\omega_U= \begin{pmatrix}\alpha_0^0-\frac{1}{n+1}\alpha^l_l & \alpha^0_\mu & \dd r \\ 
\alpha^\mu_0 & \alpha^\mu_\nu -\frac{1}{n+1}\alpha^l_l\delta^\mu_\nu & \dd y^\mu\\ - P_{i0}\dd x^i & -P_{i\nu}\dd x^i & -\frac{1}{n+1}\alpha^l_l\end{pmatrix},\end{equation}
where $\alpha^i_j$ are the local connection forms on $U$ of an affine connection $\nabla$ in the projective class, and 
$P_{ij}$ are the components of the (projective) Schouten tensor of the given connection defined by (in abstract index notation):
\[(n-1)P_{ab}= R_{ab}-\frac{2}{n+1}R_{[ab]}.\]
In the above $R_{ab}$ is the Ricci tensor $R_{ab}=R_{ca\phantom{c}b}^{\phantom{ca}c}$. This is not a projectively invariant quantity as it depends on the choice of connection and varies under a projective change of connection by:
\[\hat{P}_{ab}=P_{ab}-\nabla_a\Upsilon_b +\Upsilon_a\Upsilon_b. \]
If one performs a change of gauge described by the matrix\footnote{{We mean here that we calculate the Cartan gauge $\tilde \omega_U$ given by pulling back the Cartan connection along the local section $\tilde{\sigma}=\sigma h$ where the original gauge is defined by the section $\sigma: U \rightarrow P$. The new gauge $\tilde\omega_U$ is obtained from $\omega_U$ by the standard formula given in Eq.~\eqref{ChangeOfTrivialisation}. }}:
\[h=\begin{pmatrix} I_n & 0 \\ -\frac{1}{n+1}\alpha^l_{lj} & 1 \end{pmatrix}, \]
then we can obtain an expression of the normal Cartan connection in terms of the Thomas invariant:
\begin{equation}\label{LocalGaugeWithThomas} \begin{pmatrix} \Pi^0_0 & \Pi^0_\mu & \dd r \\ \Pi^\mu_0 & \Pi^\mu_\nu & \dd y^\mu \\ -\tilde{P}_{k0}\dd x^k & -\tilde{P}_{k\nu}\dd x^k & 0  \end{pmatrix}, \end{equation}
If we set $\chi=-\frac{1}{n+1}\alpha^l_l$, then $\tilde{P}_{ab}=P_{ab}-\nabla_a\chi_b+\chi_a\chi_b$. It is easily checked that $\tilde{P}$ is projectively invariant, {i.e. only depends on $\Pi^i_{jk}$, see in particular Eq.~\eqref{AnalogueSchoutenThomas}}
\begin{rema}
It is interesting to observe that in order to preserve the above form for the Cartan gauge under a change of coordinates $x\to \bar{x}$, the corresponding change of gauge is given by:
\begin{equation}\label{ChangeOfGauge} \begin{pmatrix} \pfrac{x^i}{\bar{x}_j} & 0 \\ -\frac{1}{n+1}\pfrac{\bar{x}^l}{x^k}\frac{\partial^2 x^k}{\partial\bar{x}^l\partial\bar{x}^j} & 1  \end{pmatrix} \mod Z.\end{equation}
This correctly implements Equation~\eqref{ThomasChangeOfCoordinates} on the first $n\times n$ block of the local connection form. One can think of choosing the Cartan gauge of the connection to be given by~\eqref{LocalGaugeWithThomas} as being equivalent to the choice of coordinates on $P^2(\Mbar)/H$ we made at the beginning of Section~\ref{SectionDefinitionProjectiveStructure}.
\end{rema}
\begin{rema}
If one considers the matrix-valued $2$-form:
\[\tilde{\Omega}^i_j = \dd \Pi^i_j + \Pi^i_k\wedge \Pi^k_j,  \] then we have the following expression of $\tilde{P}_{ab}$ defined above:
\begin{equation}\label{AnalogueSchoutenThomas} \tilde{P}_{ab}= \frac{1}{n-1}\tilde{\Omega}^i_j\left(\pfrac{}{x^i},\pfrac{}{x^l}\right)\dd x^l_{a}\dd x^j_{b}. \end{equation}
One may observe that since $\Pi^i_i=0$, and hence $\tilde{\Omega}^i_i=0$, $\tilde{P}$ is symmetric.
\end{rema}
Equation~\eqref{LocalGaugeWithThomas} gives the expression of the unique normal Cartan connection determined by a projective structure on $\Mbar$ (see Proposition~\ref{NormalCartanManifoldWithBoundary}), in an appropriate gauge.
We are interested in how this normal Cartan connection of a projective structure pulls back to the boundary under the necessary condition for non-rigidity given by $i^*\Pi^0_\mu =0$ for all $\mu$, where $i : \partial M \hookrightarrow \Mbar $ is the canonical inclusion. It follows that:
\begin{equation}\label{PullbackLocalForm}i^*\omega_U=\begin{pmatrix}i^*\Pi_0^0& 0 & 0 \\ 
i^*\Pi^\mu_0 & i^*\Pi^\mu_\nu & \dd y^\mu\\ - \tilde{P}_{\sigma0}\dd y^\sigma & -\tilde{P}_{\sigma\nu}\dd y^\sigma & 0\end{pmatrix}.\end{equation}
One might observe that the Cartan gauges take their values in a Lie subalgebra of $\mathfrak{g}$, that is, the Lie algebra of the subgroup $\IsoHyperplane$ defined by:
\[\IsoHyperplane =\left\{ \begin{pmatrix}a&0&0\\b&A&u\\\beta & \delta &\gamma\end{pmatrix} \mod Z, \quad a\neq 0, \begin{pmatrix}A & u \\ \delta& \gamma \end{pmatrix}\in GL(n;\R) \right\}.\]
This is also exactly the isotropy subgroup of the projective hyperplane in $\RP^n$ defined, in homogenous coordinates, by $\mathcal{H}=\{X_0=0\}$.

We are therefore lead to conjecture that, in this specific case, we are given a canonical Cartan geometry on the boundary of which these Cartan gauges furnish the local data. It remains to identify an appropriate closed subgroup $\tilde{H}$ of $\tilde{G}$ such that $\dim\IsoHyperplane - \dim \tilde{H}= n-1$, and an appropriate geometric picture for the model. 

Since $\IsoHyperplane$ stabilises the hyperplane, it naturally acts on it. The action is easily seen to be transitive and hence we can consider the isotropy group of any point on the plane, for instance: ${}^t\begin{pmatrix}0 & 0_{\R^{n-1}}&1\end{pmatrix}$, which is readily identified as :
\[\tilde{H} =\left\{ \begin{pmatrix}a&0&0\\b&A&0\\\beta & \delta &\gamma\end{pmatrix} \mod Z, \quad a,\gamma \neq 0, b\in \R^{n-1}, \beta \in \R, A\in GL(n-1;\R) \right\} \subset H.\]

The proof of Lemma~\ref{CoordinateIndependanceLemma} also provides evidence that this is the correct choice, since coordinate changes between boundary charts of $\Mbar$ naturally restrict to elements $\tilde{H}$ at boundary points. This also implies that there is a natural reduction $Q \hookrightarrow i^*P$ to a principal $\tilde{H}$-bundle.

Observing finally that $i^*\omega_U\mod \tilde{\mathfrak{h}}$ is a linear isomorphism between $T_x\partial M$ and $\tilde{\mathfrak{g}}/\tilde{\mathfrak{h}}$ it follows that:
\begin{prop}
Suppose that at every boundary point, $\Pi^0_\mu \in \vspan (\dd r)$, for some, and hence all, local boundary charts and connections in the projective class. Let $Q \hookrightarrow i^*P$ be the natural reduction to a $\tilde{H}$-principal bundle, obtained by restricting to boundary compatible change of coordinates on $\Mbar$ to $\partial M$. Then pulling back the normal Cartan projective connection to $Q$ leads to a Cartan geometry modeled on the \emph{non-effective} Klein geometry $\IsoHyperplane/\tilde{H}$ on $\partial M$.
 \end{prop}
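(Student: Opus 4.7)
The plan is to verify the three defining axioms of a Cartan geometry for $(Q, \tilde{\omega})$, where $\tilde{\omega}$ denotes the pullback of the normal Cartan projective connection $\omega$ to $Q \hookrightarrow i^*P$. Concretely, one must check that $\tilde{\omega}$ takes values in $\tilde{\mathfrak{g}}$ and satisfies (1) $\tilde{H}$-equivariance, (2) reproduction of fundamental vector fields, and (3) pointwise linear isomorphism $\tilde{\omega}_q : T_q Q \to \tilde{\mathfrak{g}}$.

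First I would confirm that $\tilde{\omega}$ is globally well-defined on $Q$ with values in $\tilde{\mathfrak{g}}$. The local formula~\eqref{PullbackLocalForm} shows that, under the non-rigidity hypothesis, $i^*\omega_U$ lies in $\tilde{\mathfrak{g}}$ in every boundary chart. Combined with the observation that the transition functions between canonical sections arising from boundary-compatible coordinate changes restrict at $\partial M$ to $\tilde{H}$-valued maps --- precisely the structural content of the reduction $Q$, as implicit in the proof of Lemma~\ref{CoordinateIndependanceLemma} --- this yields a globally defined $\tilde{\mathfrak{g}}$-valued 1-form on $Q$.

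Axioms (1) and (2) are essentially automatic. Right-equivariance is inherited from the $H$-equivariance of $\omega$, since $\tilde{H} \subset H$ acts on $Q$ as the restriction of the $H$-action on $i^*P$. Reproduction of fundamental vector fields follows because any $X \in \tilde{\mathfrak{h}} \subset \mathfrak{h}$ generates a fundamental vector field on $Q$ that is the restriction of the one it generates on $i^*P$, and $\omega(X^*) = X$ immediately gives $\tilde{\omega}(X^*) = X$.

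The only non-trivial step, and the point I expect to be the main obstacle, is axiom (3). A dimension count matches: $\dim Q = (n-1) + \dim \tilde{H} = \dim \tilde{\mathfrak{g}}$, since the hyperplane $\tilde{G}/\tilde{H}$ has dimension $n-1$, so it suffices to prove injectivity of $\tilde{\omega}_q$. The key observation, already noted in the excerpt, is that~\eqref{PullbackLocalForm} modulo $\tilde{\mathfrak{h}}$ retains only the entries $\dd y^\mu$, which form a basis of $T^*_x \partial M$; consequently $\tilde{\omega}_q$ descends to a linear isomorphism $T_x \partial M \to \tilde{\mathfrak{g}}/\tilde{\mathfrak{h}}$. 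Any vector in $\ker \tilde{\omega}_q$ must therefore project to zero in $T_x \partial M$, hence be vertical, hence coincide with a fundamental vector field generated by some $X \in \tilde{\mathfrak{h}}$; axiom (2) then forces $X = 0$, completing the verification.
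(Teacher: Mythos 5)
Your proposal is correct and follows essentially the same route as the paper: the paper's argument consists precisely of the three observations you make — the local gauges \eqref{PullbackLocalForm} are $\tilde{\mathfrak{g}}$-valued, the transition functions restrict to $\tilde{H}$ on $\partial M$ (giving the reduction $Q$), and $i^*\omega_U \bmod \tilde{\mathfrak{h}}$ is an isomorphism onto $\tilde{\mathfrak{g}}/\tilde{\mathfrak{h}}$ — with the equivariance and fundamental-vector-field axioms left implicit. You merely spell out the routine verifications (inheritance of axioms (1)–(2) from $i^*P$, and the standard kernel-is-vertical argument combined with the dimension count) that the paper omits.
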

As mentioned in the previous statement, it should be observed that the action of $\tilde{G}$ on $\mathcal{H}$ is \emph{not} effective, indeed the kernel $K$ is given by:
\[K=\left\{ \begin{pmatrix}a & 0 \\ b & \lambda I_{n}\end{pmatrix} \mod Z, a,\lambda \neq 0, b\in \R^n\right\}. \]
However, the effective geometry associated to this model is projective geometry in $n-1$ dimensions, i.e.
 \[ \tilde{G}/\tilde{H}\cong (\tilde{G}/K) / (\tilde{H}/K) \cong \RP^{n-1}. \]
We shall now show that \enquote{factoring out} the kernel from the Cartan gauges of the non-effective Cartan geometry leads to a Cartan projective structure on $\partial M$. Since projective geometry is an \emph{effective} Klein geometry~\cite[Chapter 5\S2]{Sharpe:1997aa}, a projective Cartan geometry on a manifold without boundary $N$ is uniquely determined by a \emph{Cartan atlas}: a collection: $\{U_i, \omega_{U_i}\}$ where $\{U_i\}$ is an open cover of $N$, and $\omega_{U_i}$ a Cartan gauge on $U_i$ (see Section~\ref{Projective_Structures_On_MWB}). Note that the Cartan gauges satisfy the following compatibility condition: there exists a smooth map $h_{ij}: U_i \cap U_j \rightarrow H$ such that
\begin{equation} \label{ChangeOfTrivialisation} \omega_{U_j}= h_{ij}^*\omega_{H} + \textrm{Ad}_{h_{ij}^{-1}}\omega_{U_i}, \end{equation}
where $\omega_H$ denotes the Maurer-Cartan form of $H$. Therefore we will reason with the Cartan gauges and appeal to~\cite[Chapter 5\S2]{Sharpe:1997aa}.
\begin{prop}
Let $\{U_i\}$ be an open cover of $\partial M$ by boundary charts of $\Mbar$. For each $U_i$, let $\omega_{U_i}$ be the Cartan gauge of the normal Cartan connection on $\Mbar$ given by Eq.~\eqref{LocalGaugeWithThomas}. %

Define $\tilde{\omega}_i=i^*\omega_{U_i} \mod \mathfrak{k}$, where $\mathfrak{k}$ is the Lie algebra of $K$, then $\{U_i\cap \partial M, \tilde{\omega}_i\}$ is a Cartan atlas for a projective geometry on $\partial M$.
\end{prop}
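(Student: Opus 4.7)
The plan is to verify that $\{U_i \cap \partial M, \tilde{\omega}_i\}$ satisfies the two defining conditions of a Cartan atlas for the effective projective Klein geometry in dimension $n-1$, using as input the non-effective atlas $\{U_i \cap \partial M, i^*\omega_{U_i}\}$ supplied by the previous proposition. The crucial structural observation is that $K$ coincides with the kernel of the natural homomorphism $\IsoHyperplane \to \mathrm{Aut}(\mathcal{H}) \cong \PGL(n-1,\R)$ induced by the action of $\IsoHyperplane$ on $\mathcal{H}$: elements of $K$ act on the hyperplane as homotheties, and conversely any element of $\IsoHyperplane$ fixing $\mathcal{H}$ pointwise modulo projectivity must have the form exhibited for $K$. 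As the kernel of a homomorphism, $K$ is normal in $\IsoHyperplane$ and $\mathfrak{k}$ is an ideal in $\tilde{\mathfrak{g}}$; moreover, a direct check gives $K \subset \tilde{H}$. The induced quotients then identify $(\IsoHyperplane/K,\tilde{H}/K)$ with the Klein pair of projective geometry on $\RP^{n-1}$.

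To verify the cocycle condition, recall that the previous proposition furnishes, for each pair $(i,j)$, a smooth transition function $h_{ij}: U_i \cap U_j \cap \partial M \to \tilde{H}$ with
\[ i^*\omega_{U_j} = h_{ij}^*\omega_{\tilde{H}} + \Ad_{h_{ij}^{-1}}(i^*\omega_{U_i}). \]
Setting $\tilde{h}_{ij} := \pi_K \circ h_{ij}$, where $\pi_K : \tilde{H} \to \tilde{H}/K$ denotes the natural projection, normality of $K$ ensures that $\Ad$ descends to $\tilde{\mathfrak{g}}/\mathfrak{k}$, while $\mathfrak{k} \subset \tilde{\mathfrak{h}}$ together with a direct computation gives $\omega_{\tilde{H}} \equiv \pi_K^*\omega_{\tilde{H}/K} \pmod{\mathfrak{k}}$. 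Reducing the displayed identity modulo $\mathfrak{k}$ then yields the required compatibility
\[ \tilde{\omega}_j = \tilde{h}_{ij}^*\omega_{\tilde{H}/K} + \Ad_{\tilde{h}_{ij}^{-1}} \tilde{\omega}_i. \]

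Finally, the linear-isomorphism condition passes to the quotient without effort: since $\mathfrak{k} \subset \tilde{\mathfrak{h}}$, the canonical identification $(\tilde{\mathfrak{g}}/\mathfrak{k})/(\tilde{\mathfrak{h}}/\mathfrak{k}) \cong \tilde{\mathfrak{g}}/\tilde{\mathfrak{h}}$ shows that $\tilde{\omega}_i \bmod \tilde{\mathfrak{h}}/\mathfrak{k}$ factors through $i^*\omega_{U_i} \bmod \tilde{\mathfrak{h}}$, which was already observed to be a linear isomorphism from $T_x\partial M$ onto $\tilde{\mathfrak{g}}/\tilde{\mathfrak{h}}$. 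The essential step in the whole argument is thus the identification of $K$ as the kernel of the hyperplane action; the remainder is a manifestation of the general principle that whenever the kernel of non-effectiveness of a Cartan geometry is normal in the structure group and contained in the isotropy subgroup, one may quotient it out to obtain a Cartan atlas for the associated effective Klein pair.
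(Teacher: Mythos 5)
Your proposal is correct and follows essentially the same route as the paper: both take the transition functions $h_{ij}$ (valued in $\tilde{H}$ at boundary points), set $\tilde{h}_{ij}=\pi\circ h_{ij}$, establish that $h_{ij}^*\omega_{\tilde{H}}$ projects to $\tilde{h}_{ij}^*\omega_{\tilde{H}/K}$ modulo $\mathfrak{k}$ (the paper via a commutative diagram of left translations, you by asserting $\omega_{\tilde H}\equiv\pi_K^*\omega_{\tilde H/K}\bmod\mathfrak{k}$, which is the same fact), and then reduce the compatibility identity modulo $\mathfrak{k}$. Your additional remarks on the normality of $K$, the descent of $\Ad$, and the linear-isomorphism condition make explicit points the paper leaves implicit or treats in the surrounding text, but do not change the argument.
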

\begin{proof}
We only need to show that the gauges $\tilde{\omega}_i$ are compatible, that is, that there are maps $\tilde{h}_{ij}: U_i \cap U_j \cap \partial M \rightarrow \tilde{H}/{K}$ such that:
\[ \tilde{\omega}_j = \tilde{h}_{ij}^*\omega_{\tilde{H}/K} + \textrm{Ad}_{\tilde{h}_{ij}^{-1}}\tilde{\omega}_i. \]
By definition, there is a map $h_{ij}: U_i\cap U_j \rightarrow H$ given by Equation~\eqref{ChangeOfGauge} that, as remarked above, takes its values in $\tilde{H}$ at boundary points, and such that:
\begin{equation} \label{CompatibilityInTildeH} \omega_{U_j}=h^*_{ij}\omega_{H} + \textrm{Ad}_{h_{ij}^{-1}}\omega_{U_i}. \end{equation}
Set $\tilde{h}_{ij}=\pi \circ h_{ij}$, where $\pi : \tilde{H}\rightarrow \tilde{H}/K$ is the canonical projection morphism. Now, fix $h\in H$ and note that we have the following commutative diagram:
\begin{center}
\begin{tikzcd}
\tilde{H} \arrow[d,"\pi"] \arrow[r,"L_{h^{-1}}"] & \tilde{H} \arrow[r,"\pi"] & \tilde{H}/K\\
\tilde{H}/K  \arrow[swap,urr,"L_{\pi(h^{-1})}"]
\end{tikzcd}
\end{center}
where $L_{\bullet}$ denotes left multiplication in the corresponding group. Differentiating this at $h$, gives the following relationship between the Maurer-Cartan forms:
\[ \pi_{*e}\circ \underbrace{{L_{h^{-1}}}_{*h}}_{(\omega_{\tilde{H}})_{h}} = \underbrace{{L_{\pi(h^{-1})}}_{*\pi(h)}}_{(\omega_{\tilde{H}/\tilde{K}})_{\pi(h)}} \circ \pi_{*h}. \] 
It follows from this that for all $p\in U_i\cap U_j \cap \partial M$, $X\in T_p \partial M$
\[ \begin{aligned}(\pi \circ h_{ij})^*(\omega_{\tilde{H}/\tilde{K}})_p(X)&= (\omega_{\tilde{H}/\tilde{K}})_{\pi(h(p))}\left(\pi_{*h(p)}h_{*p}(X)\right)\\&=\pi_{*e}\left((\omega_{\tilde{H}})_{h_{ij}(p)}{h_{ij}}_*(X)\right)\\&=\pi_{*e} (h^*(\omega_{\tilde{H}})_p(X)). \end{aligned}\]
This identity shows that considering Eq.~\eqref{CompatibilityInTildeH} modulo $\mathfrak{k}$, leads to the compatibility of the gauges $\tilde{\omega}_i$ and $\tilde{\omega}_j$. 
\end{proof}

We have therefore obtained a projective structure on the boundary $\partial M$. 
\subsection{Non-rigidity and geodesics}
The definition of the (unparametrised) geodesics of a projective structure given in~\cite[\S 3, Chapter 8]{Sharpe:1997aa} carries over to manifolds with boundary without essential modification. In terms of the Thomas invariant, using~\cite[Proposition 3.5, Chapter 8]{Sharpe:1997aa}\footnote{One might think that there is a first order term, namely $\varepsilon(\dot{c})\theta^i(\dot{c})$, missing from the numerator in the equation in~\cite[Proposition 3.5, Chapter 8]{Sharpe:1997aa}, however since $\varepsilon(\dot{c})$  does not depend on $i$, it can be included in the function $\lambda$.}, they are the curves $\gamma$ for which there is a function $\lambda$ such that:
\begin{equation}\label{GeodesicEquation} \ddot{\gamma}^i + \Pi^i_{jk}\dot\gamma^j \dot{\gamma}^k = \lambda\dot \gamma^i\end{equation}
\begin{rema} By assumption, in a boundary chart, the functions $\Pi^i_{jk}$ admit extensions to an open set in $\R^n$. The above equations can consequently be extended and studied as equations on an open subset $\R^n$. The usual uniqueness results show the restrictions of solutions to the coordinate neighbourhood of the boundary point are independent of the choice of extension. However, geodesics can now be defined on any type of interval, as they may start or end at boundary points.
\end{rema}

In this section we will study the link between the geodesics of the projective structure on $\Mbar$ to those on $\partial M$. Observing that the local Thomas invariants $\tilde{\Pi}^i_{jk}$ of this induced structure are related to $\Pi^i_{jk}$ at boundary points by the following formula :
\[ \tilde{\Pi}^\mu_{\nu\sigma}=\Pi^\mu_{\nu\sigma} +\frac{2}{n}\Pi^0_{0(\nu}\delta^\mu_{\sigma)}, \]

\begin{prop}
Suppose that at every boundary point, $\Pi^0_\mu \in \vspan (\dd r)$, for some, and hence all, local boundary charts. Then the geodesics of the induced projective structure on $\partial M$ are geodesics of $\Mbar$. In particular, $\partial M$ is totally geodesic. 
\end{prop}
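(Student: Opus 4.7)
The plan is to work in a boundary chart and compare the geodesic equations of a chosen representative connection $\alpha$ on $\Mbar$ with those of the induced boundary connection $\tilde{\alpha}$. The key observation is that the hypothesis $\alpha^0_\mu \in \vspan(\dd r)$ at each boundary point translates, in any boundary chart, into the pointwise vanishing
\[\alpha^0_{\mu\nu}(0,y) = 0 \quad \text{for all Greek indices } \mu,\nu,\]
since $\alpha^0_{\mu\nu} = \alpha^0_{\mu}(\partial/\partial x^\nu)$ and $\dd r = \dd x^0$ has no $\dd y^\nu$ component. By the lemmas of the previous subsection this condition is independent of both the choice of chart and of representative in the projective class, so there is no loss of generality in fixing $\alpha$ together with a convenient boundary chart.

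Next I would recall that a smooth curve $t\mapsto x^i(t)$ is an unparametrised geodesic of $\alpha$ precisely when
\[\ddot{x}^i + \alpha^i_{jk}\,\dot{x}^j\dot{x}^k = \lambda(t)\,\dot{x}^i\]
for some function $\lambda$. For a curve $t\mapsto (0,y^\mu(t))$ lying in $\partial M$ one has $\dot{x}^0 \equiv 0$, so the $i=0$ component of this system reduces to $\alpha^0_{\mu\nu}(0,y)\,\dot{y}^\mu\dot{y}^\nu = 0$, which is satisfied automatically by the hypothesis. The surviving $i=\mu$ components become
\[\ddot{y}^\mu + \tilde{\alpha}^\mu_{\nu\sigma}(y)\,\dot{y}^\nu\dot{y}^\sigma = \lambda(t)\,\dot{y}^\mu,\]
which is exactly the unparametrised geodesic equation for the induced connection $\tilde{\alpha}^\mu_{\nu\sigma} = \alpha^\mu_{\nu\sigma}|_{\partial M}$ identified in the preceding discussion. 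Thus any geodesic of the induced projective structure on $\partial M$, regarded as a curve in $\Mbar$, solves the ambient unparametrised geodesic system with the same reparametrisation function $\lambda$, which establishes the first assertion.

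For the \emph{totally geodesic} statement I would appeal to uniqueness for the geodesic initial value problem: an ambient geodesic starting at a boundary point with initial velocity tangent to $\partial M$ (so $\dot{x}^0(0) = 0$) solves the same Cauchy problem as the lift of the corresponding boundary geodesic constructed above, so the two curves coincide and the ambient geodesic remains in $\partial M$ for as long as it is defined. There is no serious obstacle here: once the hypothesis is repackaged as the vanishing of $\alpha^0_{\mu\nu}$ on $\partial M$, the result is a direct unwinding of the geodesic equation in adapted coordinates combined with standard ODE uniqueness, and projective invariance of the statement has already been handled by the preceding lemmas.
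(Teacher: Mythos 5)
Your proposal is correct and follows essentially the same route the paper intends: the paper proves this proposition by "considering the local form of the [geodesic] equation", which is precisely your comparison of $\ddot{x}^i + \alpha^i_{jk}\dot{x}^j\dot{x}^k = \lambda\dot{x}^i$ with the equation for $\tilde{\alpha}^\mu_{\nu\sigma}=\alpha^\mu_{\nu\sigma}|_{\partial M}$ after translating the hypothesis into $\alpha^0_{\mu\nu}|_{\partial M}=0$. Your explicit appeal to ODE uniqueness for the totally geodesic conclusion is a sound (and slightly more detailed) completion of the argument the paper leaves implicit.
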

\begin{proof}
Let $\gamma$ be a solution of the corresponding geodesic equation on the boundary, i.e. for every $\mu$:
\[ \ddot{\gamma}^\mu + \tilde{\Pi}^\mu_{\nu\sigma}\dot{\gamma}^\nu\dot{\gamma}^\sigma=\lambda\dot\gamma^\mu \]
then:
\[ \ddot{\gamma}^\mu +\Pi^\mu_{\nu\sigma}\dot{\gamma}^\nu\dot{\gamma}^\sigma=(\lambda-\frac{2}{n}\Pi^0_{0\nu}\gamma^\nu)\dot\gamma^\mu=\tilde{\lambda}\dot\gamma^\mu. \]
Hence, viewing this as a curve on $\Mbar$, we see that this is a solution to Equation~\eqref{GeodesicEquation}, by uniqueness of the solution (up to reparametrisation), this is therefore a geodesic on $\Mbar$.
\end{proof}

We shall conclude this section by showing that this is the full geometric interpretation of non-rigidity condition.
\begin{prop}
Suppose that every geodesic of the projective structure on $\Mbar$ that is initially tangent to $\partial M$ remains on $\partial M$. Then at every boundary point $p\in \partial M$, in an arbitrary boundary chart $(r,y^\mu)$ near $p$, the condition $\Pi^0_\mu(p) \in \textnormal{span}( \dd r_p)$ is satisfied. 
\end{prop}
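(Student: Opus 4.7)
The plan is to translate the conclusion into a vanishing statement on Christoffel symbols and derive it from the geodesic equation. In a boundary chart $(r,y^1,\dots,y^{n-1})$ near $p$, the local connection 1-form decomposes as $\alpha^0_\mu = \alpha^0_{\mu 0}\,\dd r + \alpha^0_{\mu\nu}\,\dd y^\nu$, so the required membership $\alpha^0_\mu(p)\in \vspan(\dd r_p)$ is equivalent to the vanishing of the symbols $\alpha^0_{\mu\nu}(p)$ for all $\mu,\nu \in \{1,\dots,n-1\}$. By Lemma~\ref{CoordinateIndependanceLemma} together with the subsequent projective-invariance lemma, it suffices to verify this for one chart and one representative $\alpha$ of the projective class.

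Fix such a chart and a representative $\alpha$, and choose an arbitrary $v = v^\mu\,\partial_{y^\mu}|_p \in T_p\partial M$. Let $\gamma$ be the parametrised geodesic of $\nabla$ with $\gamma(0)=p$ and $\dot\gamma(0)=v$, defined for small $t$ via the geodesic ODE in the chart (extending $\alpha$ analytically across $\partial M$ if one prefers, so that solutions through boundary points are genuinely available). Its image is an unparametrised geodesic of the projective structure whose initial tangent lies in $T_p\partial M$, and so the hypothesis forces $\gamma(t)\in\partial M$ for all $t$ near $0$. In particular $\gamma^0(t)\equiv 0$, hence $\ddot\gamma^0(0)=0$. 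Writing out the $0$-component of the geodesic equation $\ddot\gamma^i+\alpha^i_{jk}(\gamma)\,\dot\gamma^j\dot\gamma^k=0$ at $t=0$ and using $\dot\gamma^0(0)=0$ yields
\[ \alpha^0_{\mu\nu}(p)\, v^\mu v^\nu = 0. \]
Since $v\in T_p\partial M$ is arbitrary, polarisation combined with the symmetry $\alpha^0_{\mu\nu}=\alpha^0_{\nu\mu}$ (torsion-freeness) gives $\alpha^0_{\mu\nu}(p)=0$ for every $\mu,\nu$, as desired.

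The computation itself is immediate once the geodesic equation is written down, so there is no substantive obstacle. The only point requiring a bit of care is ensuring the existence of a geodesic issuing from a boundary point with boundary-tangent initial velocity; this is routine because the geodesic ODE in the boundary chart extends smoothly (in fact analytically) through $r=0$, so either one works in a small ambient neighbourhood obtained by analytic extension of $\alpha$ across $\partial M$, or one simply observes that the local ODE admits solutions through $(p,v)$ in the tangent bundle, which is all that is needed to read off $\ddot\gamma^0(0)$.
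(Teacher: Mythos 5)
Your proof is correct and follows essentially the same route as the paper's: take the affinely parametrised geodesic issuing from $(p,v)$ with $v\in T_p\partial M$, use the hypothesis to conclude it stays in $\{r=0\}$, read off $\alpha^0_{\mu\nu}(p)v^\mu v^\nu=0$ from the second derivative (the paper phrases this as differentiating $\dd r(\dot\gamma)=0$ along the geodesic), and polarise using the torsion-free symmetry. The extra remarks on chart/representative independence and on the existence of geodesics through boundary points are harmless but not needed, since the argument works directly for any chart and any connection in the class.
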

\begin{proof}
Choose a point $p\in \partial M$ and $X_p \in T_p\partial M$. Considering the geodesic with initial conditions $(p,X_p)$ then by hypothesis for all $t\in I$:
\[ \dd r(\dot{\gamma}(t))=\dot{\gamma}^0(t)=0,\]
where $I$ is a domain of definition of the geodesic.
Using Equation~\eqref{GeodesicEquation} it follows that:
\[\Pi^0_{\mu\nu}(\gamma(t))\dot\gamma^\mu(t)\dot\gamma^\nu (t) =0, \]
for all $t\in I$, evaluating this at $t=0$ and using that $(p,X_p)$ is arbitrary, we conclude that $\Pi^0_{\mu\nu}\equiv 0$ along the boundary.
\end{proof}
\begin{rema}
The gauge in the Cartan atlas of the Cartan geometry produced by the procedure in the proof of Proposition 4.2 is given by:
\[ \begin{pmatrix} i^* \Pi^\mu_\nu + \frac{1}{n}i^*\Pi^0_0 \delta^\mu_\nu & \dd y^\mu \\ -\tilde{P}_{\sigma\nu}\dd y^\sigma & \frac{1}{n}i^*\Pi^0_0 \end{pmatrix}. \]
One might naturally wonder if this is a normal projective Cartan geometry. Translating the diagram in Figure~\ref{RicciDef}, one can see that this is the case if and only if: 
\[ i^*\tilde{\Omega}^\mu_\nu\left(\cdot,\pfrac{}{y^\mu}\right)-\tilde{P}_{\sigma\nu}(\dd y^\mu \wedge \dd y^\sigma)(\cdot, \pfrac{}{y^\mu}) + \tilde{P}_{\sigma\lambda}(\dd y^\sigma \wedge \dd y^\lambda) (\cdot, \pfrac{}{y^\nu})=0, \]
or, after expanding and using the symmetry of $\tilde{P}$:
\[(n-2)\tilde{P}_{\sigma \nu}dy^\sigma= - i^*\tilde{\Omega}^\mu_\nu(\cdot,\pfrac{}{y^\mu}).  \]
This can then be shown to lead to the condition
\[ \tilde{P}_{\sigma\nu}=\tilde{\Omega}^0_{\nu}\left(\pfrac{}{r},\pfrac{}{y^\sigma}\right),\]
that will not be satisfied in general.
\end{rema}

\section{Further examples}
\begin{exam}\label{ExRigid}
 A projective disk (the \enquote{inside} of a projective conic defined by the homogenous polynomial $X^2+Y^2=Z^2$) is (projective) boundary rigid.
Indeed, let us introduce the affine chart which puts the projective line defined by the plane $\{Z=0\}$ at infinity and defining local coordinates $(r,t)$ in the disk by
\[x=(1-r)\cos t, y=(1-r)\sin t.\]
(So that $r=0$ is the boundary).

The Thomas invariant corresponding to the canonical flat projective structure obtained from the projective plane by restriction to the disk is given in these coordinates in matrix form by:
\[ \Pi =\begin{pmatrix}\frac{2}{3}\frac{\dd r}{1-r} & (1-r)\dd t \\ -\frac{2}{3}\frac{\dd t}{1-r} & -\frac{2}{3}\frac{\dd r}{1-r} \end{pmatrix}\] so at a boundary point:
\[ \Pi = \begin{pmatrix} \frac{2}{3}\dd r & \dd t \\ -\frac{2}{3} \dd t & -\frac{2}{3}\dd r \end{pmatrix}.\]
Which satisfies the assumptions of our Theorem~\ref{ThmRigidity}.

\end{exam}
To put our theorem to the test, let us search for diffeomorphisms which restrict to the identity on the boundary of the disk, within the connected component of the identity of $O(2,1)\mod Z$. These can be parametrised by Euler \enquote{angles} as follows:
\[g=\begin{pmatrix} \cos \theta & -\sin \theta & 0 \\ \sin \theta & \cos \theta & 0 \\ 0 & 0 & 1\end{pmatrix}\begin{pmatrix} \cosh \psi & 0 & \sinh \psi \\ 0 & 1 & 0 \\ \sinh \psi & 0 & \cosh\psi\end{pmatrix} \begin{pmatrix} \cos \phi & -\sin \phi & 0 \\ \sin \phi & \cos \phi & 0 \\ 0 & 0 & 1\end{pmatrix}  \]
Now, we request that these diffeomorphisms restrict to the identity map at $r=0$. This means that for all $t \in (-\pi,\pi]$ the following equations must hold:
\[\begin{cases}
\cos \theta \cosh\psi \cos(\phi+t) - \sin \theta \sin(\phi+t)=\left(\ch \psi + \sh \psi\cos(\phi+t) \right)\cos t,\\
\sin\theta \cosh\psi\cos(\phi+t) + \cos\theta \sin(\phi+t)=\left(\ch \psi + \sh \psi\cos(\phi+t) \right)\sin t,
\end{cases}\]
Take $t=-\phi$, multiplying the second equation by $i$ and adding the two equations gives:
\[ e^{i\theta} \cosh \psi = e^\psi e^{-i\phi} \Leftrightarrow \cosh \psi = e^{\psi} e^{-i(\phi+\theta)}. \]
Since the left hand side is positive and real it follows that $\sin(\phi+\theta)=0$ and $\cos(\phi+\theta)\geq 0$, therefore $\phi = - \theta \mod 2\pi$.
It follows also that $\cosh \psi = e^\psi$ which implies $\sinh \psi =0$ and therefore $\psi =0$.
Consequently, $g$ is the identity map.

\begin{exam}
One can remark that Examples~\ref{ExNonRigid} and~\ref{ExRigid} are based on projective conics; the essential difference between them is that in the non-rigid case the boundary is defined by a degenerate conic and, in the rigid case, by a non-degenerate conic. These examples generalise without essential modification to higher dimensions. 

In fact, they also cover the case for the models of the projective compactification of Lorentzian manifolds given by Minkowski spacetime (degenerate case) in which the boundary is $\emph{not}$ boundary rigid. In contrast, the projective compactification of de Sitter spacetime (non-degenerate case) will have this rigidity property. 
\end{exam}

\bigskip

\printbibliography[title=Bibliography]
 \end{document}